\DeclareMathAlphabet{\mathpzc}{OT1}{pzc}{m}{it}
\DeclareFontShape{T1}{calligra}{m}{n}{<->s*[1.2]callig15}{}
\DeclareMathAlphabet{\mathcalligra}{T1}{calligra}{m}{n}
\newtheorem{Def}{Definition}[section]
\newtheorem*{Teono}{Theorem}
\newtheorem{Lema}[Def]{Lemma}
\newtheorem{Cor}[Def]{Corollary}
\newtheorem{Rem}[Def]{Remark}
\newtheorem{Theo}{Theorem}
\newtheorem{Coro}[Theo]{Corollary}
\newtheorem{Quest}[Theo]{Question}
\newcommand*{\fun}[3]{#1: #2 \rightarrow #3}
\newcommand*{\ccomp}[1]{\mathcal{C}(#1)}
\newcommand*{\htcomp}[1]{\mathcal{HT}(#1)}
\newcommand*{\gcomp}[1]{\mathcal{G}(#1)}
\newcommand*{\htstrat}[2]{\mathcal{HT}_{#1}(#2)}
\newcommand*{\EMod}[1]{\mathrm{Mod}^{*}(#1)}
\newcommand*{\lk}[1]{\mathcalligra{lk} \hspace{0.12cm}(#1)}
\newcommand*{\col}[1]{\thicksim_{#1}}
\newcommand*{\ncol}[1]{\nsim_{#1}}
\newcommand*{\piHT}{\pi_{\mathcal{HT}}}
\newcommand*{\piC}{\pi_{\mathcal{C}}}
\newcommand*{\ColonEqq}{\mathrel{\mathop:}=}
\newcommand*{\biprime}{\prime\prime}
\newcommand*{\veps}{\varepsilon}
\newcommand*{\epa}{edge-preserving alternating }
\newcommand*{\Aut}[1]{\mathrm{Aut}(#1)}
\author{Jes\'{u}s Hern\'{a}ndez Hern\'{a}ndez}
\title{Alternating maps on Hatcher-Thurston graphs}
\date{}
\begin{document}
\maketitle
\begin{abstract}
 Let $S_{1}$ and $S_{2}$ be connected orientable surfaces of genus $g_{1}, g_{2} \geq 3$, $n_{1},n_{2} \geq 0$ punctures, and empty boundary. Let also $\fun{\varphi}{\htcomp{S_{1}}}{\htcomp{S_{2}}}$ be an edge-preserving alternating map between their Hatcher-Thurston graphs. We prove that $g_{1} \leq g_{2}$ and that there is also a multicurve of cardinality $g_{2} - g_{1}$ contained in every element of the image. We also prove that if $n_{1} = 0$ and $g_{1} = g_{2}$, then the map $\widetilde{\varphi}$ obtained by filling the punctures of $S_{2}$, is induced by a homeomorphism of $S_{1}$.
\end{abstract}
\section*{Introduction}
\indent Suppose $S_{g,n}$ is an orientable surface of finite topological type, with genus $g \geq 3$, empty boundary, and $n \geq 0$ punctures. The (extended) \textit{mapping class group} is the group of isotopy classes of self-homeomorphisms of $S_{g,n}$.\\
\indent In 1980 (see \cite{HTComplex}), Hatcher and Thurston introduce the Hatcher-Thurston complex of a surface, which is the $2$-dimensional CW-complex whose vertices are multicurves called \textit{cut systems}, $1$-cells are defined as elementary moves between cut systems, and $2$-cells are defined as appropriate ``triangles'', ``squares'' and ``pentagons''. See Section \ref{prelim} for the details. They used this complex to prove that the index 2 subgroup of $\EMod{S_{g,n}}$ of orientation preserving isotopy classes, is finitely presented. The $1$-skeleton of this complex is called the \textit{Hatcher-Thurston graph}, which we denote by $\htcomp{S_{g,n}}$.\\
\indent There is a natural action of $\EMod{S_{g,n}}$ on the Hatcher-Thurston complex by automorphisms, and in \cite{IrmakKorkmaz} Irmak and Korkmaz proved that the automorphism group of the Hatcher-Thurston complex is isomorphic to $\EMod{S_{g,n}}$. Inspired by the different results in combinatorial rigidity on other simplicial graphs (like the curve graph in \cite{Shack} and \cite{JHH2}, and the pants graph in \cite{AraPants}), we obtain analogous results concerning simplicial maps between Hatcher-Thurston graphs.\\
\indent Let $S_{1} = S_{g_{1},n_{1}}$ and $S_{2} = S_{g_{2},n_{2}}$ with $g_{1}, g_{2} \geq 2$ and $n_{1}, n_{2} \geq 0$. A simplicial map $\fun{\varphi}{\htcomp{S_{1}}}{\htcomp{S_{2}}}$ is \textit{alternating} if the restriction to the star of any vertex, maps cut systems that differ in exactly $2$ curves to cut systems that differ in exactly $2$ curves. See Section \ref{prelim} for the details. In Section \ref{chap4sec2} we prove our first result concerning this type of map:
\begin{Theo}\label{TheoA}
 Let $S_{1}$ and $S_{2}$ be connected orientable surfaces, with genus $g_{1}, g_{2} \geq 2$ respectively, with empty boundary and $n_{1}, n_{2} \geq 0$ punctures respectively. Let $\fun{\phi}{\htcomp{S_{1}}}{\htcomp{S_{2}}}$ be an edge-preserving and alternating map. Then we have the following:
 \begin{enumerate}
  \item $g_{1} \leq g_{2}$.
  \item There exists a unique multicurve $M$ in $S_{2}$ with $g_{2} - g_{1}$ elements such that $M \subset \phi(C)$ for all cut systems $C$ in $S_{1}$.
 \end{enumerate}
\end{Theo}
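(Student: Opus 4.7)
Fix a cut system $C=\{c_{1},\ldots,c_{g_{1}}\}$ in $S_{1}$ and set $D=\phi(C)$. The plan is to attach to $C$ an injection $\sigma_{C}\colon C\to D$ recording, for each $c_{i}$, the unique curve of $D$ that is altered when one performs an elementary move on $c_{i}$ in $S_{1}$. Once $\sigma_{C}$ is constructed and shown to be injective, the bound $g_{1}\leq g_{2}$ and the candidate multicurve $M_{C}:=D\setminus\sigma_{C}(C)$ come for free; the remaining work is then to show that $M_{C}$ is independent of $C$ and is contained in the image of every cut system.

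The first main step is well-definedness of $\sigma_{C}$: I must check that two different replacements $c_{i}',c_{i}''$ of the same $c_{i}$ give rise to the same replaced curve of $D$. The key combinatorial fact I would prove is that any triangle in $\htcomp{S}$ (three pairwise adjacent vertices) consists of three cut systems sharing $g-1$ curves. The proof is a short contradiction: if the triple intersection had size $g-2$, the three pairwise symmetric differences would produce curves $a,b,c$ pairwise intersecting in one point, while two of them must coexist in a cut system and therefore be disjoint. Applying this fact to the triangle $C,\,(C\setminus\{c_{i}\})\cup\{c_{i}'\},\,(C\setminus\{c_{i}\})\cup\{c_{i}''\}$ settles the case in which $c_{i}'$ and $c_{i}''$ meet once; for arbitrary replacements I would connect them by a path $\gamma_{0}=c_{i}',\ldots,\gamma_{k}=c_{i}''$ of non-separating curves in the one-holed torus $S_{1}\setminus(C\setminus\{c_{i}\})$ with consecutive terms meeting once (such a path exists by connectedness of the Farey graph of the torus) and iterate the triangle argument.

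With $\sigma_{C}$ well-defined, injectivity is where the alternating hypothesis enters in an essential way. For $i\neq j$ and any replacements $c_{i}',c_{j}'$, the cut systems $(C\setminus\{c_{i}\})\cup\{c_{i}'\}$ and $(C\setminus\{c_{j}\})\cup\{c_{j}'\}$ both lie in the star of $C$ and differ in exactly two curves, so the alternating condition forces their images to differ in exactly two curves as well. Writing these images as $D\setminus\{\sigma_{C}(c_{i})\}\cup\{e_{i}\}$ and $D\setminus\{\sigma_{C}(c_{j})\}\cup\{e_{j}\}$, a direct set-theoretic count shows that $\sigma_{C}(c_{i})=\sigma_{C}(c_{j})$ would make these sets differ in at most one curve, a contradiction. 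Hence $\sigma_{C}$ is injective and $g_{1}\leq g_{2}$.

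To conclude I would globalize the picture. For any neighbor $C^{*}$ of $C$ the elementary move $\phi(C)\to\phi(C^{*})$ replaces a curve lying in $\sigma_{C}(C)$, so $M_{C}\subset\phi(C^{*})$; by connectedness of $\htcomp{S_{1}}$ (Hatcher--Thurston) this extends to $M_{C}\subset\phi(C')$ for every cut system $C'$ of $S_{1}$. Conversely, any curve lying in $\bigcap_{C'}\phi(C')$ cannot belong to $\sigma_{C}(C)$, since the corresponding elementary move in $S_{1}$ would expel it; hence $\bigcap_{C'}\phi(C')$ equals $M_{C}$, which therefore has cardinality $g_{2}-g_{1}$ and is independent of $C$, and the uniqueness claim falls out of this intersection characterization. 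I expect the main obstacle to be the triangle lemma together with the Farey-graph argument in the first step; once $\sigma_{C}$ is in hand, the rest is bookkeeping driven by the alternating condition.
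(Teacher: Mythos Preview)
Your strategy coincides with the paper's: your map $\sigma_{C}$ is precisely the colour correspondence on the link of $C$, the triangle lemma plus the chain-of-triangles argument is how the paper shows colours are preserved, and injectivity of $\sigma_{C}$ via the alternating hypothesis is exactly how the paper obtains $g_{1}\le g_{2}$. The gap is in your globalisation step, which is not bookkeeping.

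From $M_{C}\subset\phi(C^{*})$ for all neighbours $C^{*}$ of $C$ you cannot pass to $M_{C}\subset\phi(C')$ for all $C'$ by connectedness alone: to iterate along an edge you need $M_{C}=M_{C^{*}}$, i.e.\ that no curve of $M_{C}$ lies in $\sigma_{C^{*}}(C^{*})$. Write $C^{*}=(C\setminus\{c_{i}\})\cup\{c_{i}'\}$ and let $C^{**}$ be a neighbour of $C^{*}$ obtained by replacing some $c_{j}$ with $j\neq i$; then $C^{**}$ is at distance two from $C$, and nothing you have established controls which curve of $\phi(C^{*})$ is removed in passing to $\phi(C^{**})$. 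The paper closes this with an \emph{alternating square}: it first shows that one may replace $C^{**}$ by some $C'$ in the same $C^{*}$-colour so that $C,C^{*},C'$ are three consecutive corners of an alternating square (this needs a separate construction: a curve meeting $c_{j}$ once and disjoint from both $c_{i}$ and $c_{i}'$); the fourth corner $D$ lies in the link of $C$, so $M_{C}\subset\phi(D)$, and since alternating squares map to alternating squares (a second essential use of the alternating hypothesis) one gets $\phi(C)\cap\phi(C^{*})\cap\phi(D)\subset\phi(C')$, whence $M_{C}\subset\phi(C')$. The same-colour case then transfers this to $C^{**}$. Contrary to your expectation, this is the step where the real work lies; the triangle and connectivity lemmas are the routine part.

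A smaller point: $S_{1}\setminus(C\setminus\{c_{i}\})$ is not a one-holed torus but a genus-one surface with $2(g_{1}-1)+n_{1}$ punctures, so the Farey graph of the torus is not the correct tool. What you need is the Irmak--Korkmaz lemma that on any such surface the graph of nonseparating curves meeting a fixed nonseparating curve once (edges given by intersection number one) is connected; this is exactly what the paper invokes.
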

\indent A consequence of this theorem is that whenever we have an \epa map $\fun{\phi}{\htcomp{S_{1}}}{\htcomp{S_{2}}}$ (where the conditions of Theorem \ref{TheoA} are satisfied), we can then induce an \epa map $\fun{\varphi}{\htcomp{S_{1}}}{\htcomp{S_{2} \backslash M}}$ where $M$ is the multicurve obtained by Theorem \ref{TheoA}, and $S_{2} \backslash M$ is connected (due to the nature of Theorem \ref{TheoA}) and has genus $g_{1}$. This means we can focus solely on the case where $g_{1} = g_{2}$. However, due to the nature of $\htcomp{S_{1}}$ and the techniques available right now, it is quite difficult to study these maps if $n_{1} > 0$, and it is possible to have \epa maps if $n_{1} < n_{2}$ that are obviously not induced by homeomorphisms, e.g. creating $n_{2} - n_{1}$ punctures in $S_{1}$.\\
\indent A way around this particular complication is wondering if this is the only way for the edge-preserving alternating maps to be not induced by homeomorphisms, leading to the following question:
\begin{Quest}\label{QHTS1S2}
  Let $S_{1}$, $S_{2}$ and $S_{3}$ be connected orientable surfaces, with genus $g \geq 3$, $n_{1}, n_{2} \geq 0$ punctures for $S_{1}$ and $S_{2}$ respectively, and assume $S_{3}$ is closed. Let $\fun{\phi}{\htcomp{S_{1}}}{\htcomp{S_{2}}}$ be an edge-preserving alternating map. Is there a way to induce a well-defined map $\fun{\varphi}{\htcomp{S_{3}}}{\htcomp{S_{3}}}$ from $\phi$ by filling the punctures of $S_{1}$ and $S_{2}$? If so, is $\varphi$ induced by a homeomorphism?
\end{Quest}
\indent In Section \ref{chap4sec3} we answer this question for a particular case. If $\piHT$ is the map induced by filling the punctures of $S_{2}$, we have the following result:
\begin{Theo}\label{TheoB}
 Let $S_{1}$ and $S_{2}$ be connected orientable surfaces, with genus $g \geq 3$ and empty boundary, and assume $S_{1}$ is closed. Let $\fun{\phi}{\htcomp{S_{1}}}{\htcomp{S_{2}}}$ be an edge-preserving and alternating map. Then $$\fun{\widetilde{\phi}\ColonEqq \piHT \circ \phi}{\htcomp{S_{1}}}{\htcomp{S_{1}}}$$ is induced by a homeomorphism of $S_{1}$.
\end{Theo}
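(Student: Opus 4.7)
The plan is to promote $\widetilde{\phi}$ to a combinatorial object amenable to established curve-graph rigidity. Specifically, I will define an induced map on non-separating simple closed curves, verify that after filling punctures it becomes a simplicial injective self-map of the curve graph of the closed surface $S_{1}$, and then invoke rigidity to obtain a homeomorphism realising $\widetilde{\phi}$.

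As a preliminary step, I apply Theorem \ref{TheoA} with $g_{1}=g_{2}=g$ and $n_{1}=0$: the associated multicurve $M$ has cardinality $0$, so $\phi$ sends each cut system of $S_{1}$ to a cut system of $S_{2}$. I then check that $\piHT \circ \phi$ is well-defined on cut systems. The only possible obstruction is that two curves of some $\phi(C)$ might cobound an annulus in $S_{2}$ containing a puncture, and hence become isotopic after filling. However, such a pair would separate $S_{2}$ into the annular region and its complement, and since the remaining curves of $\phi(C)$ are one-dimensional they cannot exhaust the $2$-dimensional interior of either region; this forces $S_{2}\setminus\phi(C)$ to be disconnected, contradicting the cut-system property of $\phi(C)$.

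Next, I build the induced map $\phi_{*}$ on non-separating simple closed curves of $S_{1}$ using the alternating property. Given such a $c$, choose a cut system $C \ni c$ and an elementary move $C\to C':=(C\setminus\{c\})\cup\{c'\}$; the adjacent cut systems $\phi(C)$ and $\phi(C')$ differ in exactly two curves, so I define $\phi_{*}(c)$ to be the unique element of $\phi(C)\setminus\phi(C')$. Independence of the choices of $C$ and $c'$ is a connectivity argument over the subgraph of cut systems containing $c$. Composing with the curve-level puncture-filling map $\piC$ yields $\widetilde{\phi}_{*}:\ncomp{S_{1}}\to\ncomp{S_{1}}$. Disjointness of curves is preserved because any two disjoint non-separating curves with connected complement extend to a common cut system, and by the preliminary step their $\phi_{*}$-images remain non-isotopic after filling. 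Once $\widetilde{\phi}_{*}$ is shown to be simplicial and injective, I apply the rigidity results for curve graphs of closed surfaces from \cite{Shack} or \cite{JHH2} to obtain a homeomorphism $h$ of $S_{1}$ realising $\widetilde{\phi}_{*}$ on non-separating curves; a final unravelling of the construction shows $\widetilde{\phi}(C)=h(C)$ for every cut system $C$, since a cut system is determined by its constituent curves.

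I expect the main obstacle to be establishing injectivity of $\widetilde{\phi}_{*}$. Well-definedness of $\phi_{*}$ and preservation of disjointness follow from now-standard uses of the alternating and edge-preserving hypotheses, but injectivity of the curve-level map after puncture filling requires one to rule out various conflations arising from the puncture structure of $S_{2}$ for pairs of curves that do not jointly lie in a cut system. This step, where the combinatorics of $\htcomp{S_{1}}$ must most delicately control the topology of $S_{2}$, is the technical heart of the argument, and I anticipate handling it via chains of cut systems together with a more refined use of the alternating condition to transport curves across intersections.
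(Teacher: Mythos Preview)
Your overall strategy---induce a curve-level map from $\phi$, fill punctures, and feed the result into curve-graph rigidity---is exactly what the paper does. The construction of $\phi_{*}$ via elementary moves is equivalent to the paper's $\psi$, which is obtained by applying Theorem~\ref{TheoA} to the restriction $\phi|_{\htstrat{\{\alpha\}}{S_{1}}}$; both yield the unique curve in $\phi(C)$ common to every image of a cut system containing $\alpha$.

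There are, however, two genuine gaps in your plan. First, you have misplaced the technical heart. The difficult step is not injectivity of $\widetilde{\phi}_{*}$ but \emph{simpliciality}: showing that disjoint nonseparating curves $\alpha,\beta$ whose union \emph{separates} $S_{1}$ have disjoint images. Such a pair never lies in a common cut system, so your cut-system argument says nothing about it, and nothing about $\htcomp{S_{1}}$ directly relates $\phi_{*}(\alpha)$ and $\phi_{*}(\beta)$. The paper handles this (Lemma~\ref{inter0v2}) via an intricate detour: one first shows that certain intersection-two pairs (``spherical-Farey neighbours of type A'') have images with nonzero intersection, then uses a \emph{cutting halving multicurve} $H$ containing neither $\alpha$ nor $\beta$ to separate $S_{2}$ along $\psi(H)$ and argue by a counting contradiction that $\psi(\alpha)$ and $\psi(\beta)$ must land in different components. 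Your final paragraph gestures at ``chains of cut systems'' and ``transporting curves across intersections'', but none of that machinery suffices here; the halving-multicurve argument is not a refinement of the alternating condition but a genuinely new topological ingredient.

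Second, the rigidity theorems you cite (\cite{Shack}, \cite{JHH2}) are stated for the \emph{full} curve graph $\ccomp{S_{1}}$, not for the nonseparating curve graph. The paper therefore extends $\widetilde{\psi}$ to separating curves by sending a separating curve to the boundary of a regular neighbourhood of the image of a defining chain, and then checks well-definedness and edge-preservation of this extension (Lemmas~\ref{HTwelldefined} and~\ref{hatpsiedge}). Without this step, or an appeal to a rigidity result specifically for $\ncomp{S_{1}}$ (e.g.\ \cite{Irmak3}, which has its own hypotheses), your invocation of \cite{Shack} or \cite{JHH2} is premature. Note also that Theorem~A of \cite{JHH2} requires only an edge-preserving map, not an injective one, so once simpliciality is in hand injectivity is not a separate obstacle.
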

\indent This implies that the only way to obtain a map from $\htcomp{S_{1}}$ to $\htcomp{S_{2}}$ that is edge-preserving and alternating, is to use a homeomorphism of $S_{1}$ and then puncture the surface to obtain $S_{2}$.\\
\indent Theorem \ref{TheoB} is proved by using $\phi$ to induce maps between the underlying curves of the cut systems, and eventually induce an edge-preserving self-map of the curve graph of $S_{1}$ (see Section \ref{chap4sec3} for the details). Then, by the Theorem A of \cite{JHH2} (the second article of a series of which this work is also a part) we have that said self-map is induced by a homeomorphism.\\
\indent Later on, in Section \ref{chap4sec4} we prove a consequence of Theorems \ref{TheoA} and \ref{TheoB} concerning isomorphisms and automorphisms between Hatcher-Thurston graphs.
\begin{Coro}\label{CoroC}
 Let $S_{1}$ and $S_{2}$ be connected orientable surfaces, with genus $g_{1}, g_{2} \geq 2$ respectively, with empty boundary and $n_{1}, n_{2} \geq 0$ punctures respectively. If $\fun{\phi}{\htcomp{S_{1}}}{\htcomp{S_{2}}}$ is an isomorphism, we have that $\phi$ is an alternating map and $g_{1} = g_{2}$. Moreover, this implies that if $S = S_{g,0}$ with $g \geq 3$, then $\Aut{\htcomp{S}}$ is isomorphic to $\EMod{S}$.
\end{Coro}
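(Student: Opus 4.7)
The plan is to prove that every simplicial isomorphism between Hatcher--Thurston graphs is automatically edge-preserving and alternating, after which the corollary follows from Theorems \ref{TheoA} and \ref{TheoB}.

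To see that such a $\phi$ is alternating, fix a cut system $C = \{c_{1}, \ldots, c_{g_{1}}\}$ of $S_{1}$ and consider its star in $\htcomp{S_{1}}$. This star decomposes into $g_{1}$ disjoint \emph{pencils} $P_{1}, \ldots, P_{g_{1}}$, where $P_{i}$ collects the cut systems obtained from $C$ by replacing $c_{i}$ alone. Two vertices of the star lying in distinct pencils differ in exactly $2$ curves and therefore cannot be adjacent in $\htcomp{S_{1}}$, since an elementary move changes only one curve; two vertices of the star in the same pencil differ in exactly $1$ curve and may or may not be adjacent. Hence the induced subgraph of $\htcomp{S_{1}}$ on the star of $C$ has all its edges inside individual pencils. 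Granting that each $P_{i}$ is connected as an induced subgraph---a claim that reduces to the connectedness of the link of $c_{i}$ in the Farey-type graph of the genus-$1$ complementary subsurface $S_{1} \setminus (C \setminus \{c_{i}\})$---the pencils are exactly the connected components of this induced subgraph. Since $\phi$ is a simplicial isomorphism, it carries the induced subgraph on the star of $C$ onto the analogous one at $\phi(C)$ and hence bijects the pencils of the first with those of the second. This simultaneously yields $g_{1} = g_{2}$ (equal number of pencils) and shows that any two vertices of the star of $C$ in different pencils map to two vertices of the star of $\phi(C)$ in different pencils, hence differing in $2$ curves, which is the alternating condition.

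The rest of the argument is assembly. Theorem \ref{TheoA} applied to both $\phi$ and $\phi^{-1}$ (each alternating by the preceding argument) confirms $g_{1} = g_{2}$ independently. For the automorphism claim, take $S = S_{g,0}$ with $g \geq 3$ closed and $\phi \in \Aut{\htcomp{S}}$; since $S$ is closed, the puncture-filling map $\piHT$ appearing in Theorem \ref{TheoB} is the identity, so that theorem yields directly that $\phi = \piHT \circ \phi$ is induced by a self-homeomorphism of $S$. This gives surjectivity of the natural homomorphism $\EMod{S} \to \Aut{\htcomp{S}}$; its injectivity for $g \geq 3$ is standard, following from the faithful action of $\EMod{S}$ on the curve graph of $S$. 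The principal obstacle in this plan is the connectedness claim for each pencil $P_{i}$, which amounts to a local connectedness statement for the Farey-type graph of a genus-$1$ subsurface with boundary; once that is established, the remainder of the argument is a mechanical invocation of Theorems \ref{TheoA} and \ref{TheoB}.
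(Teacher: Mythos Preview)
Your proposal is correct and follows essentially the same route as the paper: the ``principal obstacle'' you flag---connectedness of each pencil---is exactly the content of Lemma~\ref{colourtriangles} (built on Lemma~\ref{graphXgamma}, the connectedness of $X_\gamma^{S\backslash M}$), and the paper packages it as Lemma~\ref{colourepa}, applied to both $\phi$ and $\phi^{-1}$ to obtain the bijection on colours, which is your connected-components argument rephrased. The assembly (Theorem~\ref{TheoA} on $\phi$ and $\phi^{-1}$ for $g_1=g_2$, Theorem~\ref{TheoB} with $\piHT=\mathrm{id}$ for surjectivity via Corollary~\ref{CorS2isS1}, and injectivity via the action on nonseparating curves, where the paper cites Schmutz-Schaller's result on $\gcomp{S}$ rather than the curve graph) matches as well.
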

\indent We must remark that this work is the published version of the fourth chapter of the author's Ph.D. thesis (see \cite{Thesis}), and the results here presented are dependent on the results found in \cite{JHH2}, which is the published version of the third chapter. There we prove that for any edge-preserving map between the curve graphs of a priori different surfaces (with certain conditions on the complexity and genus for the surfaces) to exist, it is necessary that the surfaces be homeomorphic and that the edge-preserving map be induced by a homeomorphism between the surfaces.\\[0.3cm]
\textbf{Acknowledgements:} The author thanks his Ph.D. advisors, Javier Aramayona and Hamish Short, for their very helpful suggestions, talks, corrections, and specially for their patience while giving shape to this work.
\section{Preliminaries and properties}\label{prelim}
\indent In this section we give several definitions and prove several properties of the Hatcher-Thurston graph. Here we suppose $S = S_{g,n}$ with genus $g \geq 1$ and $n \geq 0$ punctures.\\
\indent A \textit{curve} $\alpha$ is a topological embedding of the unit circle into the surface. We often abuse notation and call ``curve'' the embedding, its image on $S$ or its isotopy class. The context makes clear which use we mean.\\
\indent A curve is \textit{essential} if it is neither null-homotopic nor homotopic to the boundary curve of a neighbourhood of a puncture.\\
\indent The (geometric) intersection number of two (isotopy classes of) curves $\alpha$ and $\beta$ is defined as follows: $$i(\alpha,\beta) \ColonEqq \min \{|a \cap b| : a \in \alpha, b \in \beta\}.$$
\indent Let $\alpha$ and $\beta$ be two curves on $S$. Here we use the convention that $\alpha$ and $\beta$ are \textit{disjoint} if $i(\alpha, \beta) = 0$ \textbf{and} $\alpha \neq \beta$.\\
\indent A \textit{multicurve} $M$ is either a single curve or a set of pairwise disjoint curves. A \textit{cut system} $C$ of $S$ is a multicurve of cardinality $g$ such that $S \backslash C$ is connected.\\
\indent Similarly, a curve $\alpha$ is separating if $S\backslash \{\alpha\}$ is disconnected, and is nonseparating otherwise. Note that a cut system can only contain nonseparating curves, and also $S \backslash C$ has genus zero, thus a cut system $C$ can be \textbf{characterized} as a maximal multicurve such that $S \backslash C$ is connected.\\
\indent Two cut systems $C_{1}$ and $C_{2}$ are related by an elementary move if they have $g-1$ elements in common and the remaining two curves intersect once.\\
\indent The \textit{Hatcher-Thurston graph} $\htcomp{S}$ is the simplicial graph whose vertices correspond to cut systems of $S$, and where two vertices span an edge if they are related by an elementary move. We will denote by $\mathcal{V}(\htcomp{S})$ the set of vertices of $\htcomp{S}$.\\
\indent If $M$ is a multicurve on $S$, we will denote by $\htstrat{M}{S}$ the (possibly empty) full subgraph of $\htcomp{S}$ spanned by all cut systems that contain $M$.
\begin{Rem}\label{FirstRem}
 Let $M$ and $M^{\prime}$ be multicurves on $S$ such that neither $\htstrat{M}{S}$ nor $\htstrat{M^{\prime}}{S}$ are empty graphs. Then $\htstrat{M}{S} \subset \htstrat{M^{\prime}}{S}$ if and only if $M^{\prime} \subset M$. Also, if $M$ is a multicurve such that $\htstrat{M}{S}$ is nonempty, then $\htstrat{M}{S}$ is naturally isomorphic to $\htcomp{S \backslash M}$.
\end{Rem}
Recalling previous work on the Hatcher-Thurston complex we have the following lemma.
\begin{Lema}[\cite{Wajnryb}]
 Let $S$ be an orientable connected surface of genus $g \geq 1$, with empty boundary and $n \geq 0$ punctures. Then $\htcomp{S}$ is connected.
\end{Lema}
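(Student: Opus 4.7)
The plan is to proceed by induction on the genus $g$, using the Remark above to identify Hatcher-Thurston graphs of complementary surfaces with the subgraphs $\htstrat{M}{S}$.

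For the base case $g = 1$, a cut system is a single nonseparating curve, and an elementary move is the data of two nonseparating curves meeting once. For $n = 0$ this reduces to the well-known fact that the Farey graph is connected. For $n \geq 1$ one argues directly: given nonseparating curves $\alpha, \beta$ on $S_{1,n}$, one exhibits a third nonseparating curve $\gamma$ with $i(\alpha, \gamma) = i(\beta, \gamma) = 1$, producing a length-two path in $\htcomp{S_{1,n}}$.

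For the inductive step, let $C_{1}$ and $C_{2}$ be cut systems in $S = S_{g,n}$, and assume $\htcomp{S_{g',n'}}$ is connected for all $g' < g$. If $C_{1}$ and $C_{2}$ share a curve $c$, both lie in $\htstrat{\{c\}}{S}$, which by the Remark is naturally isomorphic to $\htcomp{S \backslash c}$, a surface of genus $g-1$; by induction they are joined by a path, which lifts to a path in $\htcomp{S}$ fixing $c$ throughout. Hence it suffices to treat the case $C_{1} \cap C_{2} = \emptyset$ by reducing it to the previous one: one must show that $C_{2}$ can be altered by a sequence of elementary moves to some cut system $C_{2}'$ sharing at least one curve with $C_{1}$.

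The reduction strategy is to fix $\alpha \in C_{1}$ and decrease the total intersection $N = \sum_{\beta \in C_{2}} i(\alpha, \beta)$ by elementary moves. Given an innermost arc of $\alpha$ in the complement of $C_{2}$, one locates a curve $\beta \in C_{2}$ together with a candidate replacement $\beta'$ satisfying $i(\beta, \beta') = 1$, $\beta'$ disjoint from $C_{2} \setminus \{\beta\}$, and $i(\alpha, \beta') < i(\alpha, \beta)$. Iterating this move eventually drives $N$ to zero, at which point $\alpha$ is disjoint from the new cut system and a final elementary move inserts $\alpha$ into it, placing us in the shared-curve case. The main obstacle is verifying that the desired intersection-reducing elementary move is always available, and in particular that the candidate $(C_{2} \setminus \{\beta\}) \cup \{\beta'\}$ is a genuine cut system, i.e., that its complement in $S$ remains connected. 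This requires a careful topological analysis of $S \setminus (C_{2} \setminus \{\beta\})$ and of how $\alpha$ sits inside it; this is the technical heart of the argument and is the content of Wajnryb's work.
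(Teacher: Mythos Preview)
The paper does not supply its own proof of this lemma; it simply cites \cite{Wajnryb} and moves on. So there is no in-paper argument to compare your proposal against: the result is imported as a black box.

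That said, your sketch has a genuine error in the base case. You claim that for $S_{1,n}$ with $n \geq 1$, any two nonseparating curves $\alpha, \beta$ admit a common neighbour $\gamma$ with $i(\alpha,\gamma) = i(\beta,\gamma) = 1$, giving diameter at most $2$. This is false already for $n = 1$: on the once-punctured torus every essential curve is nonseparating, and two such curves intersect once precisely when they are Farey neighbours, so $\htcomp{S_{1,1}}$ \emph{is} the Farey graph, which has infinite diameter. The base case for $n \geq 1$ therefore needs the same kind of intersection-reduction argument you outline for the inductive step (or an appeal to the genus-$0$ arc complex after cutting along one curve), not a two-step shortcut.

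Your inductive step is the right shape and is indeed the strategy in \cite{Wajnryb} (and in the original Hatcher--Thurston paper): fix a target curve $\alpha$, reduce total intersection with the moving cut system by elementary moves, then invoke the Remark to pass to lower genus. You are also right that producing a replacement $\beta'$ with $i(\beta,\beta') = 1$ (as opposed to $0$ or $2$, which is what naive surgery typically yields) and with $(C_2 \setminus \{\beta\}) \cup \{\beta'\}$ still a cut system is the nontrivial part; you correctly defer this to Wajnryb rather than claim it is routine. With the base case repaired, the outline is sound.
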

\indent Note that this lemma and Remark \ref{FirstRem} imply that if $M$ is a multicurve on $S$ such that $S \backslash M$ is connected, then $\htstrat{M}{S}$ is connected.
\subsection{Properties of $\htcomp{S}$}\label{chap4sec1subsec1}
\indent Let $C$ be a cut system on $S$, and denote by $\lk{C}$ the full subgraph spanned by the set of cut systems on $S$ that are adjacent to $C$ in $\htcomp{S}$ (often called the \textit{link of} $C$ \textit{in} $\htcomp{S}$). Intuitively, we want to relate the elements of $\lk{C}$ that are obtained by replacing the same curve of $C$; this is done defining the relation $\col{C}$ in $\lk{C}$ by $$C_{1} \col{C} C_{2} \Leftrightarrow C_{1} \cap C = C_{2} \cap C.$$
We can easily check $\col{C}$ is an equivalence relation, and two cut systems are related in $\lk{C}$ if they are obtained by replacing the same curve of $C$ as was desired. The equivalence classes of this relation will be called \textit{colours}.\\
\indent This definition implies that in $\lk{C}$ there are $g$ colours, each corresponding to a curve in $C$ that was substituted; thus, we use the elements of $C$ to index these colours.
\begin{Rem}\label{g-2curves}
 We should note that if $C_{1}, C_{2} \in \lk{C}$ are such that $C_{1} \ncol{C} C_{2}$, then $C_{1}$ and $C_{2}$ share exactly $g-2$ curves.
\end{Rem}
\begin{figure}[h]
\begin{center}
\includegraphics[width=12cm]{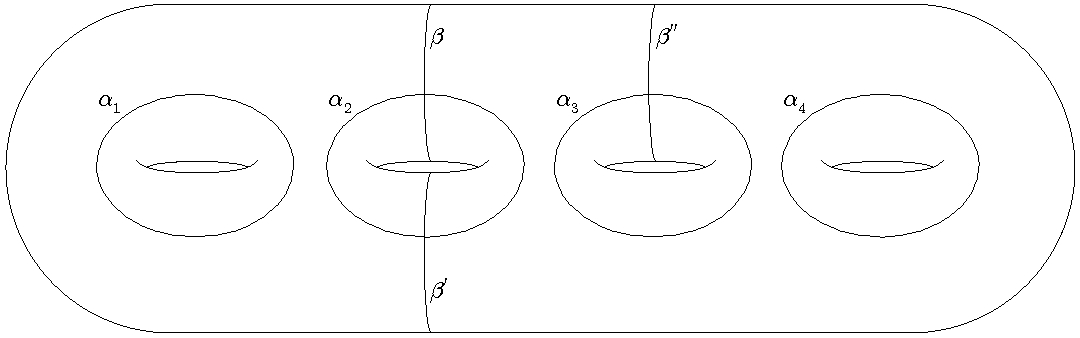}\caption{The cut systems $\{\alpha_{1},\beta,\alpha_{3},\alpha_{4}\}$ and $\{\alpha_{1},\beta^{\prime},\alpha_{3},\alpha_{4}\}$ are in the same colour with respect to $\{\alpha_{1},\ldots,\alpha_{4}\}$, while $\{\alpha_{1},\alpha_{2},\beta^{\biprime},\alpha_{4}\}$ is not.}\label{ExampleColoursFig}
\end{center}
\end{figure}
\indent Let $\gamma$ be a nonseparating curve of $S$. Following Irmak and Korkmaz's work on the Hatcher-Thurston complex (for which we recall $\htcomp{S}$ is the $1$-skeleton) in \cite{IrmakKorkmaz}, we define the graph $X_{\gamma}^{S}$ as the simplicial graph whose vertices are the nonseparating curves $\beta$ on $S$ such that $i(\beta,\gamma) = 1$, and two vertices $\alpha$ and $\beta$ span an edge if $i(\alpha,\beta) = 1$.\\
\indent In \cite{IrmakKorkmaz}, we obtain the following result, modifying the statement to suit the notation used here.
\begin{Lema}[\cite{IrmakKorkmaz}]\label{graphXgamma}
 Let $S = S_{g,n}$ such that  $g \geq 1$ and $n \geq 0$, and $\gamma$ be a nonseparating curve on $S$. Then $X_{\gamma}^{S}$ is connected.
\end{Lema}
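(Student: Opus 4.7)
The plan is to reduce the statement to a connectedness result about arcs on a simpler surface, and then argue by induction on the geometric intersection number, using a surgery argument to decrease intersections one step at a time.

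First I would observe that the ``nonseparating'' hypothesis on the vertices is automatic: any curve $\beta$ with $i(\beta,\gamma)=1$ has nontrivial class in $H_{1}(S;\mathbb{Z}/2)$ (as its mod-$2$ intersection pairing with $[\gamma]$ is $1$), whereas any separating curve bounds and therefore has trivial class. So $X_{\gamma}^{S}$ has vertex set exactly the isotopy classes of curves meeting $\gamma$ once. Next, I would cut $S$ open along $\gamma$ to obtain $S_{\gamma}$, a surface of genus $g-1$ with $n$ punctures and two boundary components $\gamma^{+}$, $\gamma^{-}$; pick a marked point on each. The cutting operation gives a bijection between isotopy classes of vertices $\beta$ of $X_{\gamma}^{S}$ and isotopy classes of simple arcs in $S_{\gamma}$ running from $\gamma^{+}$ to $\gamma^{-}$, and the number of interior transverse intersections of the arcs $a_{\alpha}, a_{\beta}$ in $S_{\gamma}$ equals $i(\alpha,\beta)$ in $S$ (after realizing everything in minimal position).

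With this dictionary in hand, I would prove connectedness by induction on $k = i(\alpha,\beta)$ for two given vertices $\alpha,\beta$. If $k=1$ there is nothing to show. The main step is to produce, for $k\geq 2$, a new curve $\delta\in X_{\gamma}^{S}$ with $i(\delta,\alpha)=1$ and $i(\delta,\beta)<k$; by induction $\delta$ is joined to $\beta$, and the edge $\alpha\delta$ completes a path from $\alpha$ to $\beta$. To build $\delta$, I would work in $S_{\gamma}$: among the $k$ interior intersections of $a_{\alpha}$ and $a_{\beta}$, choose one that is \emph{outermost} along $a_{\beta}$ (i.e., the subarc of $a_{\beta}$ from an endpoint to this intersection meets $a_{\alpha}$ only at that endpoint intersection). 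Using this outermost subarc to surger $a_{\alpha}$ produces two arcs, one of which is an arc $a_{\delta}$ from $\gamma^{+}$ to $\gamma^{-}$; by construction $a_{\delta}$ meets $a_{\alpha}$ in at most one point and meets $a_{\beta}$ in strictly fewer than $k$ points. Reclosing with $\gamma$ yields the desired $\delta$. The case $k=0$ (where $\alpha$ and $\beta$ are disjoint but distinct) has to be handled separately: I would take a regular neighborhood $T$ of $\alpha\cup\gamma$, which is a once-holed torus embedded disjointly from $\beta$, and use the Farey structure of curves on $T$ intersecting both $\alpha$ and $\gamma$ once to produce a curve $\delta$ adjacent to both $\alpha$ and $\beta$ in $X_{\gamma}^{S}$, reducing to $k=1$.

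The main obstacle is verifying that the surgered curve $\delta$ is genuinely a vertex of $X_{\gamma}^{S}$: one must check that $a_{\delta}$ is essential (not parallel to a boundary component of $S_{\gamma}$ nor to a puncture) and that its minimal position intersection numbers with $a_{\alpha}$ and $a_{\beta}$ really drop as claimed. Essentiality can fail if the outermost arc of $a_{\beta}$ co-bounds a disk or once-punctured disk with a subarc of $a_{\alpha}$, but this would force an unexpected bigon and contradict minimality; the intersection bounds follow from a standard bigon/half-bigon analysis using Lemma \ref{graphXgamma}'s own hypotheses on $g\geq 1$. Once these verifications are in place, the induction closes and Lemma \ref{graphXgamma} follows.
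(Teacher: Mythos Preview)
The paper does not actually prove this lemma; it is stated with attribution to Irmak and Korkmaz \cite{IrmakKorkmaz} and no argument is given. So there is no in-paper proof to compare against, and your proposal has to be judged on its own. The overall strategy---cut along $\gamma$, translate vertices of $X_{\gamma}^{S}$ into arcs on $S_{\gamma}$ joining the two boundary circles, and run an induction on intersection number via outermost-arc surgery---is the standard one and is essentially what Irmak and Korkmaz do.

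That said, there is a genuine error in your handling of the base case $k=0$. You assert that the once-holed torus $T$, taken as a regular neighbourhood of $\alpha\cup\gamma$, is ``embedded disjointly from $\beta$''. This is false: $\beta$ is a vertex of $X_{\gamma}^{S}$, so $i(\beta,\gamma)=1$ and $\beta$ necessarily enters $T$. Consequently the Farey argument you sketch does not produce a curve in $T$ that you can control against $\beta$. A correct treatment of $k=0$ works directly in $S_{\gamma}$: the two disjoint arcs $a_{\alpha},a_{\beta}$, together with the boundary circles $\gamma^{\pm}$, cut off a rectangle (or a planar piece), and one can exhibit by hand an arc $a_{\delta}$ from $\gamma^{+}$ to $\gamma^{-}$ crossing each of $a_{\alpha}$ and $a_{\beta}$ exactly once.

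There is a related gap in the inductive step. The arc $a_{\delta}$ obtained by your outermost surgery, once pushed off, is \emph{disjoint} from $a_{\alpha}$ (both pieces of the surgery are either subarcs of $a_{\alpha}$ or a subarc of $a_{\beta}$ whose interior misses $a_{\alpha}$), so $i(\delta,\alpha)=0$, not $1$. Hence there is no edge $\alpha\delta$ in $X_{\gamma}^{S}$, and the sentence ``the edge $\alpha\delta$ completes a path'' is unjustified. This is not fatal: once the $k=0$ case is correctly established, you connect $\alpha$ to $\delta$ via that case and $\delta$ to $\beta$ by the inductive hypothesis. But as written the argument is circular, since the inductive step leans on precisely the case you have not yet proved.
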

\indent A \textit{triangle} on $\htcomp{S}$ is a set of three distinct cut systems on $S$, whose elements pairwise span edges in $\htcomp{S}$. Now we prove that for every triangle in $\htcomp{S}$ there exists a convenient multicurve contained in each cut system.
\begin{Lema}\label{multicurveintriangle}
 Let $S = S_{g,n}$ such that $g \geq 1$ and $n \geq 0$ punctures, and $T$ be a triangle on $\htcomp{S}$. Then, there exists a unique multicurve $M$, of cardinality $g-1$, such that $M$ is contained in every element of $T$.
\end{Lema}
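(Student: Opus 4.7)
Let me write $T=\{C_1,C_2,C_3\}$. Since any two cut systems in $T$ are related by an elementary move, $|C_i\cap C_j|=g-1$ for each pair $i\neq j$, and the two curves in the symmetric difference $C_i\triangle C_j$ intersect exactly once. My plan is to show directly that $M := C_1\cap C_2\cap C_3$ has cardinality $g-1$, and then deduce uniqueness from this.

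For existence, write $C_1\setminus C_2=\{\alpha\}$ and $C_1\setminus C_3=\{\alpha'\}$, with the corresponding replacement curves $\beta\in C_2\setminus C_1$ (so $i(\alpha,\beta)=1$) and $\beta'\in C_3\setminus C_1$ (so $i(\alpha',\beta')=1$). I would split into two cases according to whether $\alpha=\alpha'$ or $\alpha\neq\alpha'$.

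In the ``same curve'' case $\alpha=\alpha'$, the set $M_0 := C_1\setminus\{\alpha\}$ has cardinality $g-1$ and, by construction, is contained in each of $C_1,C_2,C_3$. So $M_0\subseteq M$, and since $|M|\leq |C_1\cap C_2|=g-1$, we get $|M|=g-1$, as desired. The delicate step is to rule out the ``different curves'' case $\alpha\neq\alpha'$. Here $\alpha,\alpha'\in C_1$ are distinct, so they are disjoint as curves on $S$, i.e.\ $i(\alpha,\alpha')=0$. Examining the sets, $C_2$ contains $\alpha'$ but not $\alpha$, while $C_3$ contains $\alpha$ but not $\alpha'$; so $\alpha$ must lie in $C_3\setminus C_2$ and $\alpha'$ must lie in $C_2\setminus C_3$. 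Since $|C_2\cap C_3|=g-1$, the symmetric difference $C_2\triangle C_3$ has exactly two elements and must be precisely $\{\alpha,\alpha'\}$ (the remaining curves of $C_2$ and $C_3$, together with either $\beta$ or $\beta'$, must match up). The elementary move condition for $C_2$ and $C_3$ then forces $i(\alpha,\alpha')=1$, contradicting the fact that $\alpha,\alpha'$ are disjoint curves of the multicurve $C_1$. This is the main obstacle, and disposing of it cleanly requires the above bookkeeping of which curves sit in which cut system.

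For uniqueness, suppose $M'$ is any multicurve of cardinality $g-1$ contained in each $C_i$. Then $M'\subseteq C_1\cap C_2$, which itself has cardinality $g-1$; hence $M'=C_1\cap C_2$, and similarly $M'=C_1\cap C_3=C_2\cap C_3$. In particular $M'=M$, establishing uniqueness.
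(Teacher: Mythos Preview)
Your proof is correct and follows essentially the same route as the paper's, just unpacked without the colour language. The paper phrases your case split as the dichotomy $A \col{C} B$ versus $A \ncol{C} B$ (with $C$ playing the role of your $C_1$), and disposes of the second case by invoking Remark~\ref{g-2curves} to get $|A\cap B|=g-2$, contradicting adjacency; your bookkeeping with $\alpha,\alpha'$ and the symmetric difference $C_2\triangle C_3=\{\alpha,\alpha'\}$ is exactly the content of that remark made explicit. The paper's version is shorter because the colour machinery has already absorbed the set-theoretic tracking you carry out by hand, but the underlying argument is identical; your version has the minor advantage of spelling out uniqueness, which the paper leaves implicit.
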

\begin{proof}
 Let us denote $T = \{A,B,C\}$. Since $A,B \in \lk{C}$ then if $A \ncol{C} B$, by Remark \ref{g-2curves}, $|A \cap B| = g-2$; but then $A$ and $B$ would not be able to span an edge, contradicting $T$ being a triangle. Thus $A \col{C} B$. Since $A \neq B$ we have $M = A \cap C = B \cap C$ is the desired multicurve of cardinality $g-1$.
\end{proof}
\begin{Lema}\label{colourtriangles}
 Let $A, B, C$ be distinct cut systems on $S$, such that $A,B \in \lk{C}$. Then $A \col{C} B$ if and only if there exists a finite collection of triangles $T_{1}, \ldots, T_{m}$ such that $A, C \in T_{1}$, $B,C \in T_{m}$, and $T_{i}$ and $T_{i+1}$ share exactly one edge for $i = 1, \ldots, m-1$.
\end{Lema}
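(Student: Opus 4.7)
For the forward implication, suppose $A \col{C} B$. Since $A$ and $B$ are both in $\lk{C}$ and $A \cap C = B \cap C$, there is a unique curve $\gamma \in C$ not lying in $A \cap C$, and there are curves $\alpha \in A$ and $\beta \in B$ with $i(\alpha, \gamma) = i(\beta, \gamma) = 1$, so that $A = (C \setminus \{\gamma\}) \cup \{\alpha\}$ and $B = (C \setminus \{\gamma\}) \cup \{\beta\}$. Set $S' := S \setminus (C \setminus \{\gamma\})$; since $C$ is a cut system, $S \setminus C$ is a genus zero connected surface, and uncutting along a single nonseparating curve raises the genus by one, so $S'$ is connected of genus $1$. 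Both $\alpha$ and $\beta$ are nonseparating curves in $S'$ meeting $\gamma$ once, so they are vertices of $X_{\gamma}^{S'}$.

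Applying Lemma \ref{graphXgamma} to $S'$, the graph $X_{\gamma}^{S'}$ is connected, so there is a path $\alpha = \alpha_{0}, \alpha_{1}, \ldots, \alpha_{k} = \beta$ of nonseparating curves in $S'$ with $i(\alpha_{i}, \gamma) = 1$ and $i(\alpha_{i}, \alpha_{i+1}) = 1$. Setting $A_{i} := (C \setminus \{\gamma\}) \cup \{\alpha_{i}\}$, each $A_{i}$ lies in $\lk{C}$ and consecutive $A_{i}$ span edges in $\htcomp{S}$. Then $T_{i} := \{C, A_{i-1}, A_{i}\}$ for $i = 1, \ldots, k$ is a triangle containing $C$, with $A, C \in T_{1}$ and $B, C \in T_{k}$. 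Two consecutive triangles $T_{i}, T_{i+1}$ share the vertices $C$ and $A_{i}$, and hence exactly the edge between them, since the third vertices $A_{i-1}$ and $A_{i+1}$ are distinct.

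For the backward implication, the crucial observation is that two triangles sharing exactly one edge have the same associated multicurve from Lemma \ref{multicurveintriangle}. Indeed, if $T = \{U, V, W\}$ and $T' = \{U, V, Z\}$ share the edge $\{U, V\}$, then the multicurves $M_{T}$ and $M_{T'}$ provided by Lemma \ref{multicurveintriangle} both have cardinality $g-1$ and are contained in $U \cap V$; since $U$ and $V$ are adjacent and distinct, $|U \cap V| = g - 1$, forcing $M_{T} = U \cap V = M_{T'}$. A straightforward induction on the chain $T_{1}, \ldots, T_{m}$ then shows that all of these triangles are associated to the same multicurve $M$ of cardinality $g-1$. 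Because $A, C \in T_{1}$ we get $M = A \cap C$, and because $B, C \in T_{m}$ we get $M = B \cap C$, so $A \cap C = B \cap C$, i.e. $A \col{C} B$.

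The main obstacle is the forward direction: producing the chain of triangles requires moving between the two $\col{C}$-equivalent neighbors $\alpha$ and $\beta$ of $\gamma$ in a controlled way, which is exactly the content of Lemma \ref{graphXgamma} applied after verifying that the cut surface $S'$ fits its hypotheses. The backward direction, by contrast, reduces to the clean bookkeeping fact above about multicurves associated to edge-sharing triangles.
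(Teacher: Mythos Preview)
Your proof is correct and follows essentially the same approach as the paper's. For the forward direction you cut along $M = C \setminus \{\gamma\}$ and apply Lemma~\ref{graphXgamma} in the genus-one surface $S' = S \setminus M$, exactly as the paper does (the paper separates out $g=1$, but your argument handles it uniformly with $M = \emptyset$); for the backward direction you observe that the Lemma~\ref{multicurveintriangle} multicurve of a triangle equals the intersection of any two of its vertices, so edge-sharing triangles carry the same multicurve---which is precisely the paper's argument.
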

\begin{proof}
 If $g = 1$, then we obtain the desired result directly from Lemma \ref{graphXgamma}, making $C = \{\gamma\}$. So, suppose $g > 1$.\\
 \indent If $A \col{C} B$, let $M = A \cap C = B \cap C$ be the multicurve of Lemma \ref{multicurveintriangle} with cardinality $g-1$. Let $\alpha$ be the curve in $A \backslash M$, $\beta$ be the curve in $B \backslash M$ and $\gamma$ be the curve in $C \backslash M$. Since $A, B \in \lk{C}$ then $\alpha$ and $\gamma$ intersect once, just the same as $\beta$ and $\gamma$; moreover, $\alpha$, $\beta$ and $\gamma$ are nonseparating curves of $S \backslash M$ since $A$, $B$ and $C$ are cut systems. Thus $\alpha$ and $\beta$ are vertices in $X_{\gamma}^{S \backslash M}$, and by Lemma \ref{graphXgamma} there exists a finite collection of nonseparating (in $S \backslash M$) curves $c_{0}, \ldots, c_{m}$ with $\alpha = c_{0}$, $\beta = c_{m}$ and $c_{i}$ adjacent to $c_{i+1}$ in $X_{\gamma}^{S \backslash M}$. Since every $c_{i}$ is a nonseparating curve of $S \backslash M$, then $\{c_{i}\} \cup M$ is a cut system of $S$ for each $i$; in particular $A = \{c_{0}\} \cup M$ and $B = \{c_{m}\} \cup M$. By construction, $T_{i+1} \ColonEqq \{\{c_{i}\} \cup M, C, \{c_{i+1}\} \cup M\}$ is a triangle for $i = 0, \ldots, m-1$, $T_{i}$ and $T_{i+1}$ share exactly one edge for $i = 0, \ldots, m-1$, $A,C \in T_{1}$, and $C,B \in T_{m}$.\\
 \indent Conversely, if $T_{1}, \ldots, T_{m}$ is a finite collection of triangles such that $A,C \in T_{1}$, $B,C \in T_{m}$ and $T_{i}$ and $T_{i+1}$ share exactly one edge for $i = 1, \ldots, m-1$, we denote by $M_{i}$ the multicurve corresponding to the triangle $T_{i}$ obtained by Lemma \ref{multicurveintriangle}, in particular $A \cap C = M_{1}$ and $B \cap C = M_{m}$. Let $D_{i}$ and $D_{i}^{\prime}$ be the cut systems in the triangle that span the edge shared by $T_{i}$ and $T_{i+1}$; since $D_{i} \cap D_{i}^{\prime} = M_{i}$ in $T_{i}$ and $D_{i} \cap D_{i}^{\prime} = M_{i+1}$ in $T_{i+1}$, we have that $M_{i} = M_{i+1}$ for $i = 1, \ldots, m-1$. Thus $M_{i} = M_{j}$ for $i \neq j$, so $A \cap C = B \cap C$ which by definition implies that $A \col{C} B$.
\end{proof}
\section{Proof of Theorem \ref{TheoA}}\label{chap4sec2}
\indent In this section, let all surfaces be of genus at least $2$, possibly with punctures.\\
\indent An \textit{alternating square} in $\htcomp{S}$ is a closed path with four distinct consecutive vertices $C_{1}$, $C_{2}$, $C_{3}$, $C_{4}$ such that $C_{1} \ncol{C_{2}} C_{3}$ and $C_{2} \ncol{C_{3}} C_{4}$. So, $C_{1}$ and $C_{3}$ have exactly $g-2$ curves in common, and $C_{2}$ and $C_{4}$ have also exactly $g-2$ curves in common. In Figure \ref{ExampleColoursFig} the cut systems $\{\alpha_{1},\alpha_{2},\alpha_{3},\alpha_{4}\}$, $\{\alpha_{1},\beta,\alpha_{3},\alpha_{4}\}$, $\{\alpha_{1},\beta,\beta^{\biprime},\alpha_{4}\}$ and $\{\alpha_{1},\alpha_{2},\beta^{\biprime},\alpha_{2}\}$ form an alternating square.
\begin{Lema}\label{Msubsetaltsquare}
 Let $C_{1}$, $C_{2}$, $C_{3}$, $C_{4}$ be consecutive vertices of an alternating square in $\htcomp{S}$. Then $C_{1} \cap C_{2} \cap C_{3} \cap C_{4}$ has cardinality $g-2$.
\end{Lema}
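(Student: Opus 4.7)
The plan is to parameterize the four cut systems explicitly using the fact that an elementary move replaces exactly one curve, and then force the intersection via the two non-colour conditions together with the closing edge.

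First I would fix notation. Since $C_1$ and $C_2$ are adjacent, write $C_2 = \{b\} \cup K$ and $C_1 = \{a\} \cup K$ for some common multicurve $K$ of size $g-1$ and curves $a \notin C_2$, $b \notin C_1$. Since $C_2$ and $C_3$ are adjacent, $C_3$ is obtained from $C_2$ by swapping a curve $c \in C_2$ with a curve $d \notin C_2$. The hypothesis $C_1 \ncol{C_2} C_3$ says $C_1 \cap C_2 \neq C_3 \cap C_2$, which translates exactly to $b \neq c$. Setting $X = C_2 \setminus \{b,c\}$ (a multicurve of size $g-2$) we then have
$$C_2 = \{b,c\} \cup X, \qquad C_1 = \{a,c\} \cup X, \qquad C_3 = \{b,d\} \cup X.$$
By Remark \ref{g-2curves}, $|C_1 \cap C_3| = g-2$. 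A direct computation gives $C_1 \cap C_3 = X \cup (\{a\} \cap \{d\})$, so this forces $a \neq d$ and $C_1 \cap C_3 = X$.

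Next I would analyze $C_4$ in the same manner. Adjacency with $C_3$ gives $C_4 = (C_3 \setminus \{e\}) \cup \{f\}$ for some $e \in C_3$, $f \notin C_3$, and the hypothesis $C_2 \ncol{C_3} C_4$ translates to $e \neq d$. Therefore $e \in \{b\} \cup X$, giving two cases. In the good case $e = b$, one has $C_4 = \{d,f\} \cup X$, so $X \subseteq C_4$, hence $X \subseteq M := C_1 \cap C_2 \cap C_3 \cap C_4$. Combining with $M \subseteq C_1 \cap C_3 = X$ yields $|M|=g-2$, as required.

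The main obstacle is ruling out the other case, $e = x_i \in X$ (say $e = x_1$), where $C_4 = \{b,d,f\} \cup (X \setminus \{x_1\})$. Here I would use the closing edge: $C_4$ and $C_1$ are adjacent, so $|C_1 \cap C_4| = g-1$. But
$$C_1 \cap C_4 = (X \setminus \{x_1\}) \cup (\{a,c\} \cap \{b,d,f\}),$$
and the previously established relations ($a \notin C_2$, $c \in C_2$, $b \neq c$, $a \neq d$, $d \notin C_2$) rule out every coincidence in $\{a,c\} \cap \{b,d,f\}$ except $a=f$ or $c=f$; since $a \neq c$, at most one of these holds, so $|C_1 \cap C_4| \leq g-2$, contradicting adjacency. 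This eliminates the bad case and completes the proof. The delicate point is the careful bookkeeping of which curves can or cannot coincide, and I expect this case analysis to be the only real work.
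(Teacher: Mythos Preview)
Your argument is correct, but it is substantially more laborious than the paper's. The paper observes a single general fact: whenever $C_{1},C_{3}\in\lk{C}$ are distinct, one has $C_{1}\cap C_{3}=C_{1}\cap C\cap C_{3}$ (in your coordinates this is exactly the computation $C_{1}\cap C_{3}=X$, but stated abstractly). Applying this once with $C=C_{2}$ and once with $C=C_{4}$ (both of which are adjacent to $C_{1}$ and $C_{3}$ in the square) gives $C_{1}\cap C_{3}\subset C_{2}$ and $C_{1}\cap C_{3}\subset C_{4}$, hence $C_{1}\cap C_{2}\cap C_{3}\cap C_{4}=C_{1}\cap C_{3}$, and Remark~\ref{g-2curves} finishes. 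No parameterisation of $C_{4}$, no case split, and no use of the closing edge $C_{4}C_{1}$ beyond the fact that $C_{1}\in\lk{C_{4}}$.

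Your route instead fixes coordinates for all four vertices and eliminates the ``bad'' placement of the swap $e\in X$ by a cardinality count on $|C_{1}\cap C_{4}|$. This works, and it has the minor virtue of making the structure of the four cut systems completely explicit, but the bookkeeping (the six pairwise non-coincidences among $a,b,c,d,f$) is exactly what the paper's symmetric argument sidesteps. In short: same destination, but you walked around the block where the paper cut straight across.
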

\begin{proof}
 Since $C_{1},C_{3} \in \lk{C_{2}}$,$C_{1} \cap C_{3} = C_{1} \cap C_{2} \cap C_{3}$; analogously $C_{1} \cap C_{3} = C_{1} \cap C_{4} \cap C_{3}$. This implies that $C_{1} \cap C_{2} \cap C_{3} \subset C_{4}$, thus $C_{1} \cap C_{2} \cap C_{3} \cap C_{4} = C_{1} \cap C_{3}$. Given that $C_{1} \ncol{C_{2}} C_{3}$, we have $|C_{1} \cap C_{2} \cap C_{3} \cap C_{4}| = g-2$.
\end{proof}
\begin{Lema}\label{consecutivealternating}
 Let $C_{1},C_{2},C_{3}$ be cut systems on $S$, such that $C_{1},C_{3} \in \lk{C_{2}}$ and $C_{1} \ncol{C_{2}} C_{3}$. There exists $C_{3}^{\prime} \in \lk{C_{2}}$ with $C_{3}^{\prime} \col{C_{2}} C_{3}$, such that $C_{1}$, $C_{2}$, $C_{3}^{\prime}$ are consecutive vertices of an alternating square.
\end{Lema}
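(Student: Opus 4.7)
The plan is to fix notation, reduce via Remark \ref{FirstRem} to the case $g = 2$, construct the missing fourth vertex of the alternating square from a curve provided by Lemma \ref{graphXgamma}, and check the axioms. First I set notation: writing $C_{2} = \{a_{1}, a_{2}, \ldots, a_{g}\}$ and relabeling, the edges $C_{1}C_{2}$ and $C_{2}C_{3}$ replace $a_{1}$ by a curve $b$ (with $i(a_{1}, b) = 1$) and $a_{2}$ by a curve $c$ (with $i(a_{2}, c) = 1$), respectively, the indices differing because $C_{1} \ncol{C_{2}} C_{3}$. Thus $C_{1} = \{b, a_{2}, a_{3}, \ldots, a_{g}\}$ and $C_{3} = \{a_{1}, c, a_{3}, \ldots, a_{g}\}$. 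Let $M \ColonEqq \{a_{3}, \ldots, a_{g}\}$, a multicurve of size $g-2$ contained in $C_{1}$, $C_{2}$, $C_{3}$, and in any candidate $C_{3}' \col{C_{2}} C_{3}$; by Lemma \ref{Msubsetaltsquare} it is also contained in any $C_{4}$ completing an alternating square with $C_{1}, C_{2}, C_{3}'$. By Remark \ref{FirstRem}, $\htstrat{M}{S}$ is naturally isomorphic to $\htcomp{S \backslash M}$, and $S \backslash M$ is connected of genus $2$ because $C_{2}$ is a cut system. Hence I may assume $g = 2$ and $M = \emptyset$.

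Now I produce $c'$. Set $S_{0} \ColonEqq S \backslash (a_{1} \cup b)$; since $i(a_{1}, b) = 1$, a regular neighborhood of $a_{1} \cup b$ is a once-holed torus, so $S_{0}$ is a connected surface of genus $1$. The curve $a_{2}$ lies in $S_{0}$. A short connectedness check using that $C_{2} = \{a_{1}, a_{2}\}$ is a cut system of $S$ shows $S_{0} \backslash a_{2} = S \backslash (a_{1} \cup a_{2} \cup b)$ is connected (cutting the holed sphere $S \backslash C_{2}$ along the arc into which $b$ degenerates merges two of the $a_{1}$-boundaries but keeps the surface connected), so $a_{2}$ is nonseparating in $S_{0}$. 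Lemma \ref{graphXgamma} then gives that $X_{a_{2}}^{S_{0}}$ is nonempty and connected; I pick any vertex $c'$, so $c'$ is a simple closed curve in $S$ with $i(c', a_{1}) = i(c', b) = 0$ and $i(c', a_{2}) = 1$, nonseparating in $S_{0}$.

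The plan is then to set $C_{3}' \ColonEqq \{a_{1}, c'\}$ and $C_{4} \ColonEqq \{b, c'\}$. Because $c'$ is nonseparating in $S_{0}$, the surface $S \backslash (a_{1} \cup b \cup c')$ is connected; regluing along $b$ or along $a_{1}$ preserves connectedness, so $S \backslash (a_{1} \cup c')$ and $S \backslash (b \cup c')$ are connected and thus $C_{3}'$ and $C_{4}$ are cut systems of $S$. The four vertices $C_{1}, C_{2}, C_{3}', C_{4}$ are pairwise distinct and each consecutive pair differs by swapping exactly one curve with another meeting it once, so they span edges in $\htcomp{S}$. The colour conditions $C_{1} \ncol{C_{2}} C_{3}'$ and $C_{2} \ncol{C_{3}'} C_{4}$ hold because along these edges the slot being modified alternates between the $a_{1}$-slot and the $a_{2}$-slot, and $C_{3}' \col{C_{2}} C_{3}$ since both replace $a_{2}$ in $C_{2}$.

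The main obstacle is the connectivity chase: I must confirm that $a_{2}$ remains nonseparating after cutting along $a_{1} \cup b$, and that a curve known only to be nonseparating in $S_{0}$ yields cut systems of $S$ when paired with $a_{1}$ or $b$. Both facts reduce to the standing principle that regluing a surface along a curve never disconnects it; the reduction to $g = 2$ via Remark \ref{FirstRem} is what keeps this bookkeeping manageable.
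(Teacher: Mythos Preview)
Your proof is correct and follows essentially the same construction as the paper: both pass to the genus-$2$ surface $S\backslash M$, take the once-holed torus around $\{a_{1},b\}$ (the paper's $\{\alpha,\alpha'\}$), and pick a curve $c'$ (the paper's $\beta''$) dual to $a_{2}$ in its complement to build $C_{3}'$ and $C_{4}$. Your write-up is in fact more careful than the paper's in verifying that $C_{3}'$ and $C_{4}$ are genuine cut systems; the only quibble is that Lemma~\ref{graphXgamma} as stated gives connectedness of $X_{a_{2}}^{S_{0}}$ rather than nonemptiness, though the latter is immediate from $a_{2}$ being nonseparating in the genus-$1$ surface $S_{0}$.
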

\begin{proof}
 Since $C_{1} \ncol{C_{2}} C_{3}$, let $M$ be the common multicurve of $C_{1}$, $C_{2}$ and $C_{3}$ obtained by Lemma \ref{multicurveintriangle}. Let also $\alpha,\beta,\alpha^{\prime},\beta^{\prime}$ be the curves such that $C_{1} = M \cup \{\alpha^{\prime},\beta\}$, $C_{2} = M \cup \{\alpha,\beta\}$ and $C_{3} = M \cup \{\alpha,\beta^{\prime}\}$.\\
 \indent Let $T$ be a regular neighbourhood of $\{\alpha, \alpha^{\prime}\}$. Since $i(\alpha,\alpha^{\prime}) = 1$, $T$ is homeomorphic to $S_{1,1}$. Let $\beta^{\biprime}$ be a nonseparating curve of $S \backslash M$ such that $i(\beta,\beta^{\biprime}) = 1$, and $\beta^{\biprime}$ is contained in $S \backslash T$ (that is possible since $S \backslash M$ has genus $2$). By construction we have the following: $C_{3}^{\prime} = M \cup \{\alpha,\beta^{\biprime}\}$ and $C_{4} = M \cup \{ \alpha^{\prime},\beta^{\biprime}\}$ are cut systems such that  $C_{3}^{\prime} \in \lk{C_{2}} \cap \lk{C_{4}}$, $C_{4} \in \lk{C_{1}} \cap \lk{C_{3}}$, $C_{3} \col{C_{2}} C_{3}^{\prime}$, $C_{1} \ncol{C_{2}} C_{3}^{\prime}$ and $C_{2} \ncol{C_{3}^{\prime}} C_{4}$. Thus $C_{1}$, $C_{2}$, $C_{3}^{\prime}$, $C_{4}$ are the consecutive vertices of an alternating square.
\end{proof}
\indent Let $S_{1} = S_{g_{1},n_{1}}$ and $S_{2} = S_{g_{2},n_{2}}$ with genus $g_{1} \geq 2$, $g_{2} \geq 1$ and $n_{1},n_{2} \geq 0$.\\
\indent A simplicial map $\fun{\phi}{\htcomp{S_{1}}}{\htcomp{S_{2}}}$ is said to be \textit{edge-preserving} if whenever $C_{1}$ and $C_{2}$ are two distinct cut systems that span an edge in $\htcomp{S_{1}}$, their images under $\phi$ are distinct and span an edge in $\htcomp{S_{2}}$.
\begin{Rem}\label{trianglestotriangles}
 Note that if $\fun{\phi}{\htcomp{S_{1}}}{\htcomp{S_{2}}}$ is an edge-preserving map, then triangles are mapped to triangles.
\end{Rem}
\indent The map $\phi$ is said to be \textit{alternating} if for all cut systems $C$ on $S_{1}$ and all $C_{1}, C_{2} \in \lk{C}$ such that $C_{1}$ and $C_{2}$ differ by exactly two curves, then $\phi(C_{1})$ and $\phi(C_{2})$ differ by exactly two curves. Note that this condition says nothing about $\phi(C)$ and its relation with $\phi(C_{1})$ and $\phi(C_{2})$.
\begin{Lema}\label{colourepa}
 Let $\fun{\phi}{\htcomp{S_{1}}}{\htcomp{S_{2}}}$ be an edge-preserving map, and $C_{1}$, $C_{2}$ and $C_{3}$ be cut systems on $S_{1}$ with $C_{1}, C_{3} \in \lk{C_{2}}$. If $C_{1} \col{C_{2}} C_{3}$, then $\phi(C_{1}) \col{\phi(C_{2})} \phi(C_{3})$. If $\phi$ is also alternating, then $C_{1} \ncol{C_{2}} C_{3}$ implies $\phi(C_{1}) \ncol{\phi(C_{2})} \phi(C_{3})$; in particular alternating squares go to alternating squares.
\end{Lema}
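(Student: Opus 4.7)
The plan is to handle the two claims in order, using Lemma \ref{colourtriangles} for the colour-preservation part and Lemma \ref{consecutivealternating} together with the alternating hypothesis for the colour-distinguishing part. For the first claim, assume $C_1 \col{C_2} C_3$. By the forward direction of Lemma \ref{colourtriangles} (as constructed in its proof), choose a chain of triangles $T_1, \ldots, T_m$ in $\htcomp{S_1}$ with $C_1, C_2 \in T_1$ and $C_3, C_2 \in T_m$, where consecutive triangles share exactly one edge. Since $\phi$ is edge-preserving, Remark \ref{trianglestotriangles} yields that each $\phi(T_i)$ is a triangle in $\htcomp{S_2}$, and the shared edge of $T_i$ and $T_{i+1}$ maps to a common edge of $\phi(T_i)$ and $\phi(T_{i+1})$. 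After collapsing consecutive equal triangles in the image if necessary (two distinct triangles in a simple graph share at most one edge), the converse direction of Lemma \ref{colourtriangles} applied to the resulting chain yields $\phi(C_1) \col{\phi(C_2)} \phi(C_3)$.

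For the second claim, assume in addition that $\phi$ is alternating and $C_1 \ncol{C_2} C_3$. By Lemma \ref{consecutivealternating}, pick $C_3' \in \lk{C_2}$ with $C_3' \col{C_2} C_3$ such that $C_1, C_2, C_3'$ are three consecutive vertices of an alternating square. In particular $C_1, C_3' \in \lk{C_2}$ with $C_1 \ncol{C_2} C_3'$, so Remark \ref{g-2curves} implies that $C_1$ and $C_3'$ differ by exactly two curves; the alternating property of $\phi$ then forces $\phi(C_1)$ and $\phi(C_3')$ to differ by exactly two curves. Meanwhile, the first claim applied to $C_3' \col{C_2} C_3$ yields $\phi(C_3') \col{\phi(C_2)} \phi(C_3)$. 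If one supposed $\phi(C_1) \col{\phi(C_2)} \phi(C_3)$, transitivity of $\col{\phi(C_2)}$ would give $\phi(C_1) \col{\phi(C_2)} \phi(C_3')$, forcing $\phi(C_1) \cap \phi(C_2) = \phi(C_3') \cap \phi(C_2)$, a set of $g_2 - 1$ curves contained in both $\phi(C_1)$ and $\phi(C_3')$; hence $\phi(C_1)$ and $\phi(C_3')$ would differ by at most one curve, contradicting the conclusion just obtained from the alternating property.

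The ``in particular'' assertion about alternating squares follows by applying the second claim to each of the two defining non-colour relations of the square $C_1, C_2, C_3, C_4$, combined with edge-preservation of $\phi$ to guarantee that the image is a closed path of four distinct vertices; distinctness of $\phi(C_1)$ and $\phi(C_3)$, and of $\phi(C_2)$ and $\phi(C_4)$, follows from the newly established non-colour relations in the image together with reflexivity of the colour relation. The main obstacle I expect is the passage through the triangle chain: one has to verify that the characterization of Lemma \ref{colourtriangles} survives applying $\phi$ despite the possibility that distinct triangles in $\htcomp{S_1}$ collapse to a single triangle in $\htcomp{S_2}$, which is handled by the trivial pruning above. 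The alternating step is then a short counting argument pitting ``differ by exactly two curves'' against colour-equivalence, which would force the images to agree on all but one curve.
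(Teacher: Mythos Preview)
Your proof is correct and follows essentially the same approach as the paper: the triangle-chain argument via Lemma~\ref{colourtriangles} for the first claim, and the combination of Remark~\ref{g-2curves} with the alternating hypothesis for the second. The only difference is that your second claim routes through an auxiliary $C_{3}'$ via Lemma~\ref{consecutivealternating}, which is unnecessary: since $C_{1} \ncol{C_{2}} C_{3}$ already implies (by Remark~\ref{g-2curves}) that $C_{1}$ and $C_{3}$ themselves differ by exactly two curves, the alternating hypothesis applies directly to the pair $(C_{1},C_{3})$ and yields $\phi(C_{1}) \ncol{\phi(C_{2})} \phi(C_{3})$ in one step, exactly as the paper does.
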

\begin{proof}
 If $C_{1} \col{C_{2}} C_{3}$, then by Lemma \ref{colourtriangles} there exists a finite collection of triangles $T_{1}, \ldots, T_{m}$ with $C_{1}, C_{2} \in T_{1}$, $C_{2}, C_{3} \in T_{m}$, and $T_{i}, T_{i+1}$ share one edge. By Remark \ref{trianglestotriangles} $\phi(T_{i})$ is a triangle for all $i = 1, \ldots, m$, with $\phi(C_{1}), \phi(C_{2}) \in \phi(T_{1})$, $\phi(C_{2}), \phi(C_{3}) \in \phi(T_{m})$ and $\phi(T_{i}), \phi(T_{i+1})$ sharing one edge; thus, once again by Lemma \ref{colourtriangles}, $\phi(C_{1}) \col{\phi(C_{2})} \phi(C_{3})$.\\
 \indent Let $\phi$ be also alternating, and $C_{1} \ncol{C_{2}} C_{3}$. By Remark \ref{g-2curves} $C_{1}, C_{3}$ differ by exactly $2$ curves and since $\phi$ is an \epa map, we have that $\phi(C_{1}), \phi(C_{3}) \in N(\phi(C_{2}))$ and $\phi(C_{1}), \phi(C_{3})$ differ by exactly $2$ curves; so, $\phi(C_{1}) \ncol{\phi(C_{2})} \phi(C_{3})$.\\
 \indent Let $\phi$ be an \epa map, and $\mathcal{S}$ be an alternating square with consecutive vertices $C_{1}, \ldots, C_{4}$. Since $C_{1}, C_{3} \in \lk{C_{2}} \cap \lk{C_{4}}$ then $\phi(C_{1}), \phi(C_{3}) \in N(\phi(C_{2})) \cap N(\phi(C_{4}))$, and as proved above $\phi(C_{1}) \ncol{\phi(C_{2})} \phi(C_{3})$ and $\phi(C_{2}) \ncol{\phi(C_{3})} \phi(C_{4})$ (since $C_{1} \ncol{C_{2}} C_{3}$ and $C_{2} \ncol{C_{3}} C_{4}$). Therefore $\phi(\mathcal{S})$ is an alternating square.
\end{proof}
\indent Note that this lemma allows us to see the importance of the alternating requirement for $\phi$. If $\phi$ were only edge-preserving (or locally injective), we would not have enough information to be certain that alternating squares are mapped to alternating squares, which is an important requirement if we ever want $\phi$ to be induced by a homeomorphism. Moreover, the rest of the results presented here would be much more complicated to prove if at all possible.\\[0.3cm]
\indent Now we are ready to prove Theorem \ref{TheoA} (which is quite similar to a result about locally injective maps for the Pants complex, that appears as Theorem C in \cite{AraPants}, though we must note that for some of the arguments in the proof being an alternating map is a key requirement).
\begin{proof}[\textbf{Proof of Theorem \ref{TheoA}}]
\indent Let $A$ be a vertex of $\htcomp{S_{1}}$. Then let $\{B_{1}, \ldots, B_{g_{1}}\} \subset \lk{A}$ be a set of representatives for the colours of $\lk{A}$. Since $B_{i} \col{A} B_{j}$ if and only if $i=j$ then $\phi(B_{i}) \col{\phi(A)} \phi(B_{j})$ if and only if $i=j$, by Lemma \ref{colourepa}; thus $\lk{\phi(A)}$ has at least a many colours as $\lk{A}$, so $g_{1} \leq g_{2}$.\\
\indent This implies that $M = \phi(B_{1}) \cap \cdots \cap \phi(B_{g_{1}})$ has cardinality $g_{2} - g_{1}$. We must also note that $M \subset \phi(A)$. We can easily check that if $B \col{A} B_{i}$ for some $i$, then $M \subset \phi(B)$: by Lemma \ref{colourepa} $\phi(B) \col{\phi(A)} \phi(B_{i})$, which means they were obtained from $\phi(A)$ by replacing the same curve, so $\phi(A) \cap \phi(B_{i}) \subset \phi(B)$; since $M \subset \phi(A) \cap \phi(B_{i})$, then $M \subset \phi(B)$.\\
\indent With this we have proved that for all $B \in A \cup \lk{A}$, $M \subset \phi(B)$. Given that $\htcomp{S_{1}}$ is connected, we only need to prove that given any element $B \in \lk{A}$, for all $C \in \lk{B}$, we have $M \subset \phi(C)$. Let $B$ and $C$ be such cut systems.\\
\indent If $C \col{B} A$, then by Lemma \ref{colourepa} $\phi(C) \col{\phi(B)} \phi(A)$, which means $\phi(C)$ and $\phi(A)$ were obtained by replacing the same curve of $\phi(B)$, so $\phi(C) \cap \phi(B) = \phi(A) \cap \phi(B)$. Since we have already proved that $M \subset \phi(B)$ and $M \subset \phi(A)$ then $M \subset \phi(A) \cap \phi(B) = \phi(C) \cap \phi(B)$. Thus $M \subset \phi(C)$.\\
\indent If $C \ncol{B} A$, then by Lemma \ref{consecutivealternating} there exists $C^{\prime} \in \lk{B}$ with $C \col{B} C^{\prime}$ and $C^{\prime} \ncol{B} A$, such that $A, B, C^{\prime}$ are consecutive vertices of an alternating square $\Sigma$. By Lemma \ref{colourepa} $\phi(\Sigma)$ is also an alternating square. Let $D$ the vertex of $\Sigma$ different from $A$, $B$ and $C^{\prime}$; since $D \in \lk{A}$, we have proved above that $M \subset \phi(D)$, thus $M \subset \phi(A) \cap \phi(B) \cap \phi(D)$ and as we have seen in the proof of Lemma \ref{Msubsetaltsquare}, $\phi(A) \cap \phi(B) \cap \phi(C^{\prime}) \cap \phi(D) = \phi(A) \cap \phi(B) \cap \phi(D)$, so $M \subset \phi(C^{\prime})$. Given that $C \col{B} C^{\prime}$, this leaves us in the previous case, therefore $M \subset \phi(C)$.
\end{proof}
\section{Proof of Theorem \ref{TheoB}}\label{chap4sec3}
\indent Hereinafter, let $S_{1} = S_{g,0}$ and $S_{2} = S_{g,n}$ with $g \geq 3$ and $n \geq 0$. Before giving the idea of the proof, we need the following definitions.\\
\indent We define the \textit{complexity of} $S_{g,n}$, denoted by $\kappa(S_{g,n})$ as $3g-3+n$. Note this is equal to the cardinality of a maximal multicurve.\\
\indent If $S_{g,n}$ is such that $\kappa(S_{g,n}) > 1$, the \textit{curve graph} $\ccomp{S_{g,n}}$, introduced by Harvey in \cite{Harvey}, is the simplicial graph whose vertices correspond to the curves of $S$, and two vertices span an edge if they are disjoint. We denote $\mathcal{V}(\ccomp{S_{g,n}})$ the set of vertices of $\ccomp{S_{g,n}}$.\\
\indent If $S_{g,n}$ is such that $g \geq 1$, the \textit{Schmutz graph} $\gcomp{S_{g,n}}$, introduced by Schmutz-Schaller in \cite{Schmutz}, is the simplicial graph whose vertices correspond to nonseparating curves of $S$, and where two vertices span an edge if they intersect once. We denote by $\mathcal{V}(\gcomp{S})$ the set of vertices of $\gcomp{S}$.\\[0.3cm]
\textbf{Idea of the proof:} We proceed by using $\phi$ to induce a map $\fun{\psi}{\mathcal{V}(\gcomp{S_{1}})}{\mathcal{V}(\gcomp{S_{2}})}$ in such a way that $\phi(\{\alpha_{1}, \ldots, \alpha_{g}\}) = \{\psi(\alpha_{1}), \ldots, \psi(\alpha_{g})\}$. Then we induce two maps $\fun{\widetilde{\phi}}{\htcomp{S_{1}}}{\htcomp{S_{1}}}$ and $\fun{\widetilde{\psi}}{\mathcal{V}(\gcomp{S_{1}})}{\mathcal{V}(\gcomp{S_{1}})}$ by filling the punctures of $S_{2}$. These maps also verify that $\widetilde{\phi}(\{\alpha_{1}, \ldots, \alpha_{g}\}) = \{\widetilde{\psi}(\alpha_{1}), \ldots, \widetilde{\psi}(\alpha_{g})\}$. Following the proofs of several properties of $\psi$ and $\widetilde{\psi}$, we extend $\widetilde{\psi}$ to an edge-preserving map $\fun{\widehat{\psi}}{\ccomp{S_{1}}}{\ccomp{S_{1}}}$ which, by Theorem A in \cite{JHH2}, is induced by a homeomorphism of $S_{1}$. Therefore $\widetilde{\psi}$ is induced by a homeomorphism of $S_{1}$.
\subsection{Inducing $\fun{\psi}{\gcomp{S_{1}}}{\gcomp{S_{2}}}$ and $\fun{\widetilde{\psi}}{\gcomp{S_{1}}}{\gcomp{S_{1}}}$}\label{chap4sec3subsec1}
\indent Let $\alpha$ be a nonseparating curve. Recall that $\htstrat{\{\alpha\}}{S_{1}}$ is isomorphic to $\htcomp{S_{1} \backslash \{\alpha\}}$. Then, given an \epa map $\fun{\phi}{\htcomp{S_{1}}}{\htcomp{S_{2}}}$ we can obtain an \epa map $\fun{\phi_{\alpha}}{\htcomp{S_{1} \backslash \{\alpha\}}}{\htcomp{S_{2}}}$. Applying Theorem \ref{TheoA} to $\phi_{\alpha}$ we know there exists a unique multicurve on $S_{2}$ of cardinality $1$, contained in the image under $\phi$ of every cut system containing $\alpha$; we will denote the element of this multicurve as $\psi(\alpha)$. In this way we have defined a function $\fun{\psi}{\mathcal{V}(\gcomp{S_{1}})}{\mathcal{V}(\gcomp{S_{2}})}$.
\begin{Lema}\label{lemadefpsi}
 Let $\fun{\phi}{\htcomp{S_{1}}}{\htcomp{S_{2}}}$ be an \epa map and $\fun{\psi}{\mathcal{V}(\gcomp{S_{1}})}{\mathcal{V}(\gcomp{S_{2}})}$ be the induced map on the nonseparating curves. If $\alpha$ and $\beta$ are nonseparating curves and $C$ a cut system on $S_{1}$, then:
 \begin{enumerate}
  \item If $\alpha \in C$, then $\psi(\alpha) \in \phi(C)$.
  \item If $\alpha \neq \beta$ and $\alpha, \beta \in C$, then $\psi(\alpha) \neq \psi(\beta)$.
  \item If $i(\alpha,\beta) = 1$, then $i(\psi(\alpha),\psi(\beta)) = 1$.
 \end{enumerate}
\end{Lema}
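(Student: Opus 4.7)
Part (1) is essentially tautological: by construction $\psi(\alpha)$ is the unique element of the multicurve produced by Theorem~\ref{TheoA} applied to $\phi_\alpha : \htstrat{\{\alpha\}}{S_1} \cong \htcomp{S_1 \setminus \{\alpha\}} \to \htcomp{S_2}$ (the hypotheses $g-1 \geq 2$ and being edge-preserving alternating pass to this restriction), and that theorem states precisely that this curve lies in $\phi(C)$ for every cut system $C$ containing $\alpha$. My strategy for parts (2) and (3) rests on the following sharpening, which I prove next: for every cut system $C$ of $S_1$ and every $\alpha \in C$, $\psi(\alpha)$ equals the unique curve $\delta_\alpha \in \phi(C)$ indexing the color in $\lk{\phi(C)}$ onto which Lemma~\ref{colourepa} sends the $\alpha$-color of $\lk{C}$. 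Here I use that both $\lk{C}$ and $\lk{\phi(C)}$ have exactly $g$ colors (since $g_1 = g_2 = g$), so the color injection provided by Lemma~\ref{colourepa} is in fact a bijection, and the indexing of each color by a curve of the corresponding cut system makes $\delta_\alpha$ unambiguously defined.

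To prove the sharpening, observe that $\psi(\alpha) \in \phi(C)$ by part (1), so $\psi(\alpha) = \delta_{\alpha_j}$ for some $\alpha_j \in C$. Suppose, for a contradiction, that $\alpha_j \neq \alpha$, and let $C' \in \lk{C}$ be obtained by an elementary move at $\alpha_j$, so that $C' \cap C = C \setminus \{\alpha_j\}$ and hence $\phi(C) \setminus \phi(C') = \{\delta_{\alpha_j}\}$. Since $\alpha \neq \alpha_j$, we have $\alpha \in C'$, so part (1) yields $\psi(\alpha) \in \phi(C')$; this contradicts $\psi(\alpha) = \delta_{\alpha_j} \notin \phi(C')$. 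Therefore $\psi(\alpha) = \delta_\alpha$, and part (2) is immediate: if $\alpha \neq \beta$ both belong to $C$, they index distinct colors of $\lk{C}$, which map to distinct colors of $\lk{\phi(C)}$, so $\psi(\alpha) = \delta_\alpha \neq \delta_\beta = \psi(\beta)$.

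For part (3), assume $i(\alpha, \beta) = 1$. A regular neighborhood $T$ of $\alpha \cup \beta$ is homeomorphic to $S_{1,1}$, and for any cut system $M$ of $S_1 \setminus T$ (a surface of genus $g-1 \geq 2$, so such $M$ exists and has cardinality $g-1$), both $C := M \cup \{\alpha\}$ and $C' := M \cup \{\beta\}$ are cut systems of $S_1$ related by an elementary move, with $C'$ lying in the $\alpha$-color of $\lk{C}$ (and, symmetrically, $C$ in the $\beta$-color of $\lk{C'}$). Applying the sharpening to $(C, \alpha)$ and to $(C', \beta)$ gives $\phi(C) \setminus \phi(C') = \{\psi(\alpha)\}$ and $\phi(C') \setminus \phi(C) = \{\psi(\beta)\}$. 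Since $\phi$ is edge-preserving, $\phi(C)$ and $\phi(C')$ are themselves related by an elementary move, and so $i(\psi(\alpha), \psi(\beta)) = 1$. I expect the entire proof to hinge on the identification $\psi(\alpha) = \delta_\alpha$ from the sharpening; once it is in place, both remaining assertions fall out with negligible extra work, making that identification the single nontrivial step.
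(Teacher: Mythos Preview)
Your proof is correct and follows essentially the same route as the paper's. The paper's argument for (2) shows that $\psi(\alpha)$ lies in $\phi(C)\cap\phi(C_{\beta})\cap\phi(C_{\gamma_1})\cap\cdots\cap\phi(C_{\gamma_{g-2}})$ and hence cannot lie in $\phi(C_{\alpha})$, which is exactly your sharpening $\psi(\alpha)=\delta_{\alpha}$ stated in the contrapositive; for (3) the paper likewise builds $C'=M\cup\{\alpha\}$ and $C''=M\cup\{\beta\}$ from a regular neighbourhood of $\alpha\cup\beta$ and reads off the intersecting pair from the elementary move between $\phi(C')$ and $\phi(C'')$. Your explicit isolation of the identification $\psi(\alpha)=\delta_{\alpha}$ is a clean way to phrase the common core, but the underlying mechanism---the colour bijection forced by $g_{1}=g_{2}$ together with part~(1)---is identical in both arguments.
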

\begin{proof}
 (1) Follows directly from the definition.\\
 \indent (2) Let $C = \{\alpha, \beta, \gamma_{1}, \ldots, \gamma_{g-2}\}$ and let $C_{\alpha}, C_{\beta}, C_{\gamma_{1}}, \ldots, C_{\gamma_{g-2}}$ be representatives of the colours in $\lk{C}$ indexed by $\alpha, \beta, \gamma_{1}, \ldots, \gamma_{g-2}$ respectively so that $\alpha \notin C_{\alpha}$, $\alpha \in C_{\beta}, C_{\gamma_{1}}, \ldots, C_{\gamma_{g-2}}$, $\beta \notin C_{\beta}$ and $\beta \in C_{\alpha}, C_{\gamma_{1}}, \ldots, C_{\gamma_{g-2}}$. Using Lemma \ref{colourepa} we have that $\phi(C_{\alpha})$, $\phi(C_{\beta})$, $\phi(C_{\gamma_{1}})$, $\ldots$, $\phi(C_{\gamma_{g-2}})$ are representatives of all the colours of $\lk{\phi(C)}$. By (1) we have that $\psi(\alpha) \in \phi(C) \cap \phi(C_{\beta}) \cap \phi(C_{\gamma_{1}}) \cap \ldots \cap \phi(C_{\gamma_{g-2}})$, so $\psi(\alpha)$ cannot be an element of $\phi(C_{\alpha})$ and, since $\beta \in C_{\alpha}$, by (1) again we have that $\psi(\beta) \in \phi(C_{\alpha})$. Therefore $\psi(\alpha) \neq \psi(\beta)$.\\
 \indent (3) Using a regular neighbourhood of $\{\alpha, \beta\}$, we can find a multicurve $M$ in $S_{1}$ such that $C^{\prime} = \{\alpha\} \cup M$ and $C^{\biprime} = \{\beta\} \cup M$ are cut systems; this implies that if $C^{\prime}$ and $C^{\biprime}$ span an edge in $\htcomp{S_{1}}$, then $\phi(C^{\prime})$ and $\phi(C^{\biprime})$ span an edge in $\htcomp{S_{2}}$. By (1) and (2), $\phi(C^{\prime}) = \{\psi(\alpha)\} \cup \psi(M)$ and $\phi(C^{\biprime}) = \{\psi(\beta)\} \cup \psi(M)$, therefore $i(\psi(\alpha), \phi(\beta)) = 1$.
\end{proof}
\indent Note that this lemma implies that if $C = \{\alpha_{1}, \ldots, \alpha_{g}\}$, we have that $\phi(C) = \{\psi(\alpha_{1}), \ldots, \phi(\alpha_{g})\}$.\\
\indent By filling the punctures of $S_{2}$ and identifying the resulting surface with $S_{1}$, we obtain a map $\fun{\piC}{\mathcal{V}(Y(S_{2}))}{\mathcal{V}(\ccomp{S_{1}})}$, where $Y(S_{2})$ is the subcomplex of $\ccomp{S_{2}}$ whose vertices correspond to curves $\gamma$ on $S_{2}$ such that all the connected components of $S_{2} \backslash \{\gamma\}$ have positive genus. Observe that $\piC$ sends nonseparating curves of $S_{2}$ into nonseparating curves of $S_{1}$, and separating curves of $S_{2}$ that separate the surface in connected components of genus $g^{\prime} > 0$ and $g^{\biprime} > 0$ into separating curves of $S_{1}$ that separate the surface in connected components of genus $g^{\prime}$ and $g^{\biprime}$. In particular, if $C$ is a cut system, $\piC(C)$ is also a cut system, thus we obtain a map $\fun{\piHT}{\mathcal{V}(\htcomp{S_{2}})}{\mathcal{V}(\htcomp{S_{1}})}$.\\
\indent Now, from $\fun{\phi}{\htcomp{S_{1}}}{\htcomp{S_{2}}}$ we can obtain the map $$\fun{\widetilde{\psi}\ColonEqq \piC \circ \psi}{\mathcal{V}(\gcomp{S_{1}})}{\mathcal{V}(\gcomp{S_{1}})},$$ and the map $$\fun{\widetilde{\phi}\ColonEqq \piHT \circ \phi}{\mathcal{V}(\htcomp{S_{1}})}{\mathcal{V}(\htcomp{S_{1}})}.$$
\begin{Cor}\label{cordefpsitilde}
 Let $\fun{\phi}{\htcomp{S_{1}}}{\htcomp{S_{2}}}$ an \epa map, $\fun{\psi}{\mathcal{V}(\gcomp{S_{1}})}{\mathcal{V}(\gcomp{S_{2}})}$ be the induced map on the nonseparating curves, and $\widetilde{\phi}$ and $\widetilde{\psi}$ as above. If $\alpha$ and $\beta$ are nonseparating curves and $C$ a cut system on $S_{1}$, then:
 \begin{enumerate}
  \item If $\alpha \in C$ then $\widetilde{\psi}(\alpha) \in \widetilde{\phi}(C)$.
  \item If $\alpha \neq \beta$ and $\alpha, \beta \in C$ then $\widetilde{\psi}(\alpha) \neq \widetilde{\psi}(\beta)$.
  \item If $i(\alpha, \beta) = 1$ then $i(\widetilde{\psi}(\alpha), \widetilde{\psi}(\beta)) = 1$.
 \end{enumerate}
\end{Cor}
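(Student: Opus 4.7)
The plan is to leverage Lemma \ref{lemadefpsi} applied to $\phi$ and $\psi$, and then push the three statements through the puncture-filling maps $\piC$ and $\piHT$. The guiding observation is that $\piHT$ sends cut systems of $S_2$ to cut systems of $S_1$, so whenever a family of $\psi$-images lies inside a single cut system of $S_2$, its $\piC$-image is forced to be a cut system of $S_1$ of the full cardinality $g$. This prevents any collapsing, which is what makes the corollary essentially a diagram-chase on top of Lemma \ref{lemadefpsi}.

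For (1), Lemma \ref{lemadefpsi}(1) gives $\psi(\alpha) \in \phi(C)$; applying $\piC$ elementwise and unpacking $\widetilde{\phi}(C) = \piHT(\phi(C))$ yields $\widetilde{\psi}(\alpha) \in \widetilde{\phi}(C)$ at once. For (2), Lemma \ref{lemadefpsi}(2) gives $\psi(\alpha) \neq \psi(\beta)$ with both curves inside the cut system $\phi(C)$ of $S_2$. Since $\piHT(\phi(C))$ is a cut system of $S_1$, it consists of exactly $g$ distinct curves, which forces $\piC$ to be injective on the $g$-element set $\phi(C)$; consequently $\widetilde{\psi}(\alpha) \neq \widetilde{\psi}(\beta)$.

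The genuine content is in (3). I would imitate the proof of Lemma \ref{lemadefpsi}(3): pick a multicurve $M$ so that $C' = \{\alpha\} \cup M$ and $C'' = \{\beta\} \cup M$ are cut systems of $S_1$ spanning an edge in $\htcomp{S_1}$. Since $\phi$ is edge-preserving, $\phi(C')$ and $\phi(C'')$ span an edge in $\htcomp{S_2}$, and by parts (1) and (2) they take the explicit form $\phi(C') = \{\psi(\alpha)\} \cup \psi(M)$ and $\phi(C'') = \{\psi(\beta)\} \cup \psi(M)$ with $i(\psi(\alpha), \psi(\beta)) = 1$. Because filling punctures is realised by a continuous surjection $S_2 \to S_1$, geometric intersection numbers cannot increase, so $i(\widetilde{\psi}(\alpha), \widetilde{\psi}(\beta)) \leq 1$.

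The main obstacle is excluding $i(\widetilde{\psi}(\alpha), \widetilde{\psi}(\beta)) = 0$: an a priori possibility is that the single intersection point between $\psi(\alpha)$ and $\psi(\beta)$ becomes removable once the extra homotopies available after plugging the punctures are allowed. I would handle this by promoting to algebraic intersection: the puncture-filling map induces a surjection $H_1(S_2) \to H_1(S_1)$ that intertwines the symplectic intersection pairings, and two simple closed curves meeting geometrically once have algebraic intersection $\pm 1$; hence $[\widetilde{\psi}(\alpha)] \cdot [\widetilde{\psi}(\beta)] = \pm 1$ in $H_1(S_1)$, forcing $i(\widetilde{\psi}(\alpha), \widetilde{\psi}(\beta)) \geq 1$. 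Combined with the upper bound this gives equality with $1$, completing (3).
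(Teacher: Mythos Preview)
Your argument is correct, and for parts (2) and (3) it follows a genuinely different route from the paper's proof.

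For (1) both proofs coincide. For (2), the paper argues topologically: since $\psi(\alpha)$ and $\psi(\beta)$ lie in the cut system $\phi(C)$, they are disjoint and together nonseparating in $S_{2}$, so one can enclose them in a subsurface $S^{\prime} \cong S_{2,1}$ whose complement in $S_{2}$ still has positive genus (using $g \geq 3$); the punctures can be pushed outside $S^{\prime}$, so $\piC$ is the identity there and the two curves stay distinct. Your counting argument---$\piHT(\phi(C))$ is a cut system of $S_{1}$, hence has $g$ distinct elements, forcing $\piC$ to be injective on $\phi(C)$---is shorter and does not use $g \geq 3$. For (3), the paper again isolates a subsurface: the regular neighbourhood $T \cong S_{1,1}$ of $\{\psi(\alpha),\psi(\beta)\}$ has complement of genus $g-1 \geq 1$, so $T$ is untouched by puncture-filling and the intersection number is preserved verbatim. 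Your approach instead bounds the geometric intersection from above by monotonicity under puncture-filling and from below via the algebraic intersection pairing on $H_{1}$. One small wording issue: the puncture-filling map is the inclusion $S_{2} \hookrightarrow S_{1}$, not a surjection; but the induced map $H_{1}(S_{2}) \to H_{1}(S_{1})$ is indeed surjective and does intertwine the intersection forms (the algebraic intersection of two transverse curves is computed locally at the crossings and is unchanged by the inclusion), so your lower bound goes through.

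The paper's subsurface method has the advantage of being uniform and entirely elementary, and it yields slightly more (the curves literally sit inside a subsurface fixed by $\piC$). Your arguments are more conceptual, avoid any genus hypothesis beyond what is needed to define cut systems, and in (2) give a pleasantly slick one-line proof.
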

\begin{proof}
 (1) Follows from Lemma \ref{lemadefpsi}.\\
 \indent (2) If $\alpha \neq \beta$ and $\alpha, \beta \in C$ then by Lemma \ref{lemadefpsi} $\psi(\alpha), \psi(\beta) \in \phi(C)$ and $\psi(\alpha) \neq \psi(\beta)$. This implies that $\psi(\alpha)$ and $\psi(\beta)$ are disjoint curves that do not together separate $S_{2}$; these two properties together are preserved by $\piC$. Indeed, let $S^{\prime}$ be a subsurface of $S_{2}$ such that $\psi(\alpha), \psi(\beta) \in \ccomp{S^{\prime}}$ and $S^{\prime}$ is homeomorphic to $S_{2,1}$; let $\gamma$ be the boundary curve of $S_{2} \backslash \mathrm{int}(S^{\prime})$, then $\gamma$ separates $S_{2}$ in two connected components, each of positive genus. Thus $S^{\prime}$ is unaffected by $\piC$, i.e. $\piC|_{\mathcal{V}(\ccomp{S^{\prime}})} = \mathrm{id}_{\mathcal{V}(\ccomp{S^{\prime}})}$. Therefore $\widetilde{\psi}(\alpha) \neq \widetilde{\psi}(\beta)$.\\
 \indent (3) Since $i(\psi(\alpha), \psi(\beta)) = 1$, let $T$ be a regular neighbourhood of $\{\psi(\alpha),\psi(\beta)\}$. Then $T$ is homeomorphic to $S_{1,1}$. Let $\gamma$ be the boundary curve in $S_{2} \backslash \mathrm{int}(T)$; then $\gamma$ is a separating curve that separates $S_{2}$ in two connected components, each of positive genus. Thus, as in (2), $T$ is unaffected by $\piC$. Therefore $i(\widetilde{\psi}(\alpha), \widetilde{\psi}(\beta)) = 1$.
\end{proof}
\indent Similarly to Lemma \ref{lemadefpsi}, this implies that if $C = \{\alpha_{1}, \ldots, \alpha_{g}\}$, we have that $\widetilde{\phi}(C) = \{\widetilde{\psi}(\alpha_{1}), \ldots, \widetilde{\psi}(\alpha_{g})\}$.\\
\indent As a consequence of Lemma \ref{lemadefpsi} and Corollary \ref{cordefpsitilde}, we have that the maps $\psi$, $\widetilde{\psi}$ and $\widetilde{\phi}$ are simplicial. Moreover, we have the following result.
\begin{Cor}\label{cortildeedgepreserving}
 $\fun{\psi}{\gcomp{S_{1}}}{\gcomp{S_{2}}}$, $\fun{\widetilde{\psi}}{\gcomp{S_{1}}}{\gcomp{S_{1}}}$ and $\fun{\widetilde{\phi}}{\htcomp{S_{1}}}{\htcomp{S_{1}}}$ are edge-preserving maps. Also, $\widetilde{\phi}$ is an alternating map.
\end{Cor}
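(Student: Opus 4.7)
The plan is to derive all four claims directly from Lemma \ref{lemadefpsi} and Corollary \ref{cordefpsitilde}, which already package the incidence data needed. First I would dispatch the edge-preservation of $\psi$ and $\widetilde{\psi}$: two nonseparating curves $\alpha, \beta$ span an edge of $\gcomp{S_{1}}$ if and only if $i(\alpha, \beta) = 1$, so Lemma \ref{lemadefpsi}(3) yields $i(\psi(\alpha), \psi(\beta)) = 1$, which forces the images to be distinct; and $\psi(\alpha), \psi(\beta)$ are nonseparating in $S_{2}$ since each lies in a cut system by construction, so they span an edge of $\gcomp{S_{2}}$. Replacing Lemma \ref{lemadefpsi}(3) with Corollary \ref{cordefpsitilde}(3) gives the analogous argument for $\widetilde{\psi}$.

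For the edge-preservation of $\widetilde{\phi}$, I would suppose $C_{1}, C_{2}$ span an edge of $\htcomp{S_{1}}$ and write $C_{1} = M \cup \{\alpha\}$, $C_{2} = M \cup \{\beta\}$ with $|M| = g - 1$ and $i(\alpha, \beta) = 1$. The observation following Corollary \ref{cordefpsitilde} gives $\widetilde{\phi}(C_{1}) = \widetilde{\psi}(M) \cup \{\widetilde{\psi}(\alpha)\}$ and $\widetilde{\phi}(C_{2}) = \widetilde{\psi}(M) \cup \{\widetilde{\psi}(\beta)\}$. Applying Corollary \ref{cordefpsitilde}(2) to each of $C_{1}$ and $C_{2}$ shows that $\widetilde{\psi}(M)$ has $g - 1$ elements and contains neither $\widetilde{\psi}(\alpha)$ nor $\widetilde{\psi}(\beta)$, while Corollary \ref{cordefpsitilde}(3) gives $i(\widetilde{\psi}(\alpha), \widetilde{\psi}(\beta)) = 1$, hence $\widetilde{\psi}(\alpha) \neq \widetilde{\psi}(\beta)$. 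Thus $\widetilde{\phi}(C_{1})$ and $\widetilde{\phi}(C_{2})$ are distinct cut systems sharing exactly $g - 1$ curves whose remaining pair intersects once, so they span an edge of $\htcomp{S_{1}}$.

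For the alternating property, I would take a cut system $C$ in $S_{1}$ together with $C_{1}, C_{2} \in \lk{C}$ satisfying $C_{1} \ncol{C} C_{2}$, and parametrise $C = \{\gamma_{1}, \gamma_{2}\} \cup M$, $C_{1} = \{\alpha, \gamma_{2}\} \cup M$, $C_{2} = \{\gamma_{1}, \beta\} \cup M$ with $i(\alpha, \gamma_{1}) = i(\beta, \gamma_{2}) = 1$. Edge-preservation (already proved) places $\widetilde{\phi}(C_{1}), \widetilde{\phi}(C_{2}) \in \lk{\widetilde{\phi}(C)}$. Injectivity of $\widetilde{\psi}$ on the cut system $C$ (Corollary \ref{cordefpsitilde}(2)), combined with $\widetilde{\psi}(\alpha) \neq \widetilde{\psi}(\gamma_{1})$ from Corollary \ref{cordefpsitilde}(3), shows that $\widetilde{\phi}(C_{1}) = \{\widetilde{\psi}(\alpha), \widetilde{\psi}(\gamma_{2})\} \cup \widetilde{\psi}(M)$ differs from $\widetilde{\phi}(C)$ by exactly one curve: the curve $\widetilde{\psi}(\gamma_{1})$ has been replaced by $\widetilde{\psi}(\alpha)$. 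Symmetrically, $\widetilde{\phi}(C_{2})$ is obtained from $\widetilde{\phi}(C)$ by replacing $\widetilde{\psi}(\gamma_{2})$. Since $\widetilde{\psi}(\gamma_{1}) \neq \widetilde{\psi}(\gamma_{2})$ by Corollary \ref{cordefpsitilde}(2) applied to $C$, these two replacements belong to distinct colours of $\lk{\widetilde{\phi}(C)}$, so $\widetilde{\phi}(C_{1}) \ncol{\widetilde{\phi}(C)} \widetilde{\phi}(C_{2})$, and Remark \ref{g-2curves} yields $|\widetilde{\phi}(C_{1}) \cap \widetilde{\phi}(C_{2})| = g - 2$, which is the required alternating condition.

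The step demanding the most care is this last one: a direct attempt to verify $|\widetilde{\phi}(C_{1}) \cap \widetilde{\phi}(C_{2})| = g - 2$ would require $\widetilde{\psi}(\alpha) \neq \widetilde{\psi}(\beta)$, but $\alpha$ and $\beta$ need neither intersect once nor lie in a common cut system, so neither part of Corollary \ref{cordefpsitilde} applies to them directly. Routing the argument through the colour structure of $\lk{\widetilde{\phi}(C)}$ avoids this obstacle, reducing matters to the much cheaper inequalities $\widetilde{\psi}(\alpha) \neq \widetilde{\psi}(\gamma_{1})$ and $\widetilde{\psi}(\gamma_{1}) \neq \widetilde{\psi}(\gamma_{2})$, both of which are immediate from Corollary \ref{cordefpsitilde}.
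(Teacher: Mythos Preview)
Your proposal is correct and follows exactly the approach the paper intends: the paper states this corollary without proof, simply declaring it a consequence of Lemma~\ref{lemadefpsi} and Corollary~\ref{cordefpsitilde}, and your argument supplies precisely the details that this attribution leaves implicit. Your observation about the alternating step---that $\widetilde{\psi}(\alpha) \neq \widetilde{\psi}(\beta)$ is not immediately available and must be circumvented via the colour structure of $\lk{\widetilde{\phi}(C)}$---is a genuine point the paper glosses over, and your resolution of it is clean.
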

\indent A \textit{pants decomposition} of $S_{i}$ (for $i = 1,2$) is a maximal multicurve of $S_{i}$, i.e. it is a maximal complete subgraph of $\ccomp{S_{i}}$. Note that any pants decomposition of $S_{i}$ has exactly $\kappa(S_{i})$ curves.\\
\indent On the other hand, we say $P$ is a \textit{punctured pants decomposition} of $S_{2}$ if $\piC(P)$ is a pants decomposition of $S_{1}$. This implies that $S_{2} \backslash P$ is the disjoint union of $3g -3$ surfaces, with each connected component $P_{i}$ homeomorphic to $S_{0,3+k_{i}}$ such that $\sum_{i} k_{i} = n$.
\begin{Lema}\label{pantstopants1}
 Let $P$ be a pants decomposition of $S_{1}$ such that no two curves of $P$ together separate $S_{1}$. Then $\psi(P)$ is a punctured pants decomposition of $S_{2}$ and $\widetilde{\psi}(P)$ is a pants decomposition of $S_{1}$.
\end{Lema}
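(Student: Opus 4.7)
The plan is to reduce both conclusions of the lemma to a single statement: that $\widetilde{\psi}(P)$ is a pants decomposition of $S_{1}$. By the definition of a punctured pants decomposition, $\psi(P)$ is a punctured pants decomposition of $S_{2}$ exactly when $\piC(\psi(P)) = \widetilde{\psi}(P)$ is a pants decomposition of $S_{1}$, so the two conclusions coincide. Since any multicurve of cardinality $\kappa(S_{1}) = 3g-3$ in $S_{1}$ is automatically a pants decomposition, it suffices to prove that $\widetilde{\psi}$ is injective on $P$ and that $\widetilde{\psi}(P)$ is a multicurve.

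First I would observe that the hypothesis forces every curve of $P$ to be nonseparating: if some $\alpha \in P$ were separating, then for any $\beta \in P \setminus \{\alpha\}$ (which exists since $|P| = 3g-3 \geq 6$), the pair $\{\alpha,\beta\}$ would still separate $S_{1}$, contradicting the hypothesis. In particular, $\psi$ and $\widetilde{\psi}$ are defined on every element of $P$. Given two distinct $\alpha,\beta \in P$, they are disjoint and together do not separate $S_{1}$, so by the change-of-coordinates principle one can extend $\{\alpha,\beta\}$ to a cut system $C$ of $S_{1}$ containing both. Applying Corollary \ref{cordefpsitilde}(2) yields $\widetilde{\psi}(\alpha) \neq \widetilde{\psi}(\beta)$, which gives injectivity of $\widetilde{\psi}|_{P}$.

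For the multicurve condition, recall the remark immediately following Corollary \ref{cordefpsitilde}: for the cut system $C$ above, $\widetilde{\phi}(C) = \{\widetilde{\psi}(\gamma) : \gamma \in C\}$, and since $\widetilde{\phi}(C)$ is a vertex of $\htcomp{S_{1}}$, its elements are pairwise disjoint curves in $S_{1}$. In particular $\widetilde{\psi}(\alpha)$ and $\widetilde{\psi}(\beta)$ are disjoint. Running this argument over every pair in $P$ shows that $\widetilde{\psi}(P)$ is a multicurve on $S_{1}$ of cardinality $3g-3$, hence a pants decomposition of $S_{1}$, proving both claims. The main thing to be careful about is the extension of $\{\alpha,\beta\}$ to a cut system, which relies on the fact that $S_{1} \setminus \{\alpha,\beta\}$ is a connected subsurface of genus $g-2 \geq 1$, so it admits $g-2$ pairwise disjoint nonseparating curves whose union with $\{\alpha,\beta\}$ remains nonseparating in $S_{1}$.
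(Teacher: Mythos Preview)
Your proof is correct and follows essentially the same approach as the paper: for any pair $\alpha,\beta \in P$, use the hypothesis to extend $\{\alpha,\beta\}$ to a cut system and then apply Lemma~\ref{lemadefpsi}/Corollary~\ref{cordefpsitilde} to conclude that the images are distinct and disjoint, giving a multicurve of cardinality $3g-3$. Your write-up is somewhat more explicit than the paper's (spelling out why every curve of $P$ is nonseparating and why the $\psi(P)$ conclusion reduces to the $\widetilde{\psi}(P)$ one), but the underlying argument is the same.
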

\begin{proof}
 Since for any two distinct curves $\alpha, \beta \in P$ we can always find a cut system containing both of them, by Lemma \ref{lemadefpsi} and Corollary \ref{cordefpsitilde} we know that $\psi(\alpha)$ is disjoint from $\psi(\beta)$ and $\widetilde{\psi}(\alpha)$ is disjoint from $\widetilde{\psi}(\beta)$. Thus, both $\psi(P)$ and $\widetilde{\psi}(P)$ are multicurves of cardinality $3g -3$, which means $\widetilde{\psi}(P)$ is a pants decomposition; then, by definition, $\psi(P)$ is a punctured pants decomposition.
\end{proof}
\begin{figure}[h]
\begin{center}
 \includegraphics[width=6cm]{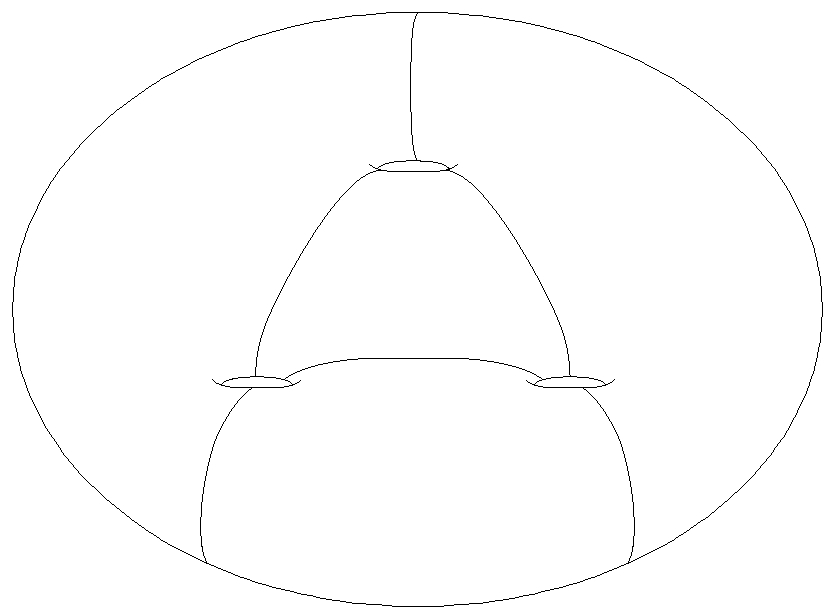} \hspace{1cm}\includegraphics[width=6cm]{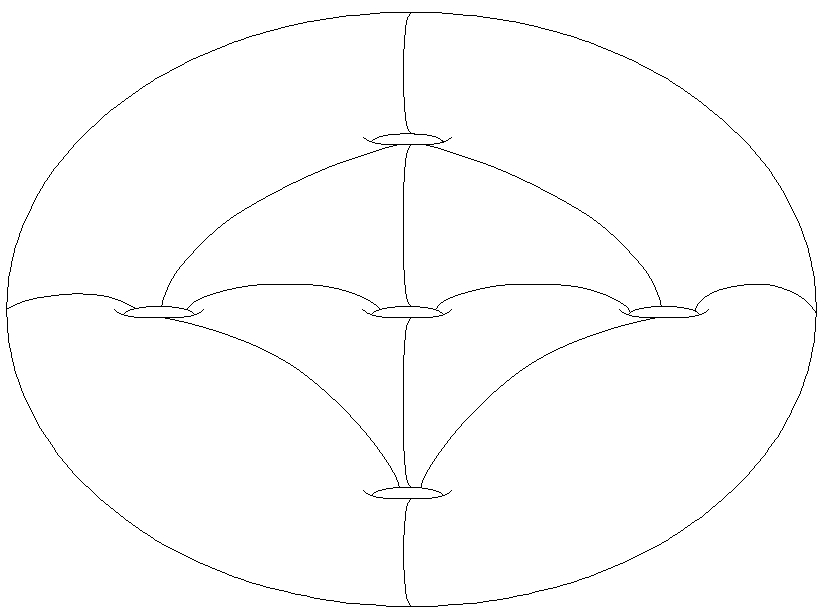} \caption{Pants decompositions for the closed surfaces of genus $3$ (left) and genus $5$ (right), such that no two curves of $P$ together separate.}\label{NonsepPantsDec}
\end{center}
\end{figure}
\indent The rest of this subsection consists of several technical definitions and lemmas, all of them leading to proving that both $\psi$ and $\widetilde{\psi}$ preserve disjointness and intersection number $1$, which we later use to extend their definitions to the respective curve complexes.\\[0.3cm]
\indent Let $\alpha$ and $\beta$ be two curve in $S_{1}$, and $N$ be a regular neighbourhood of $\{\alpha,\beta\}$. We say they are spherical-Farey neighbours if $N$ has genus zero and $i(\alpha,\beta) = 2$.\\
\indent Let $\alpha$ and $\beta$ be two nonseparating curves in $S_{1}$ that are spherical-Farey neighbours, and $N(\alpha,\beta)$ be their closed regular neighbourhood. Then $N(\alpha,\beta)$ is homeomorphic to a genus zero surface with four boundary components. Let $\veps_{0}$, $\veps_{1}$, $\veps_{2}$, $\veps_{3}$ be the boundary curves of $N(\alpha,\beta)$. We say $\veps_{i}$ and $\veps_{j}$ are connected outside of $N(\alpha,\beta)$, if there exists a proper arc in $S_{1} \backslash \mathrm{int}(N(\alpha,\beta))$ with one endpoint in $\veps_{i}$ and another in $\veps_{j}$.
\begin{Rem}\label{vepsconnected}
 If $\veps_{i}$ is a nonseparating curve, it has to be connected outside of $N(\alpha,\beta)$ to at least one other $\veps_{j}$ (with $i \neq j$), since otherwise there would not exist any curve intersecting $\veps_{i}$ exactly once, and thus $\veps_{i}$ would not be nonseparating.
\end{Rem}
\indent We say $\alpha$ and $\beta$ are \textit{of type A} if $\veps_{i}$ is a nonseparating curve for all $i$ and $\veps_{i}$ is connected outside of $\Sigma_{\alpha,\beta}$ to $\veps_{j}$ for all $i,j \in \{0,1,2,3\}$. See Figure \ref{ExampleTypeA}.
\begin{figure}[h]
\begin{center}
 \includegraphics[width=8cm]{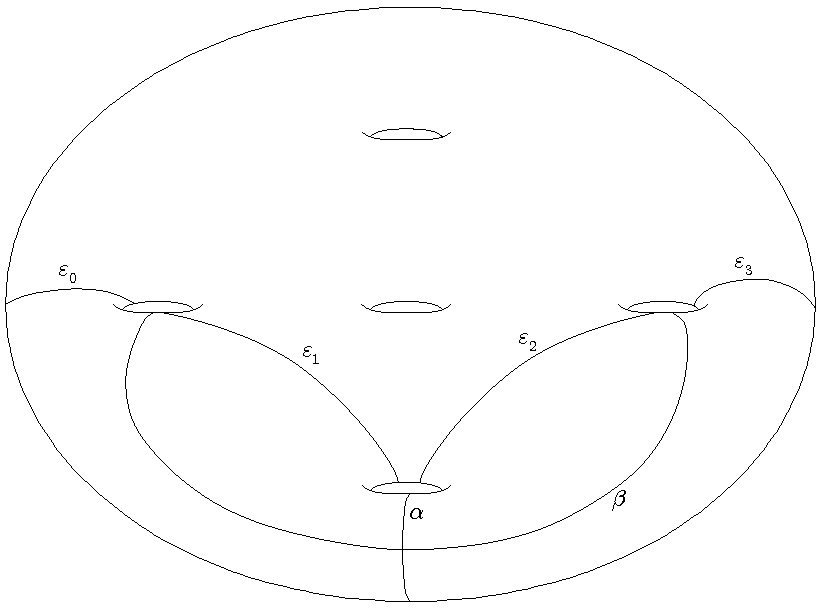} \caption{The curves $\alpha$ and $\beta$ are spherical-Farey neighbours of type A.}\label{ExampleTypeA}
\end{center}
\end{figure}
\begin{Rem}\label{pidisjoint}
 Remember that while $\piHT$ is an edge-preserving map it is not alternating. Also, $\piC$ has the property that if $\alpha$ and $\beta$ are disjoint nonseparating curves, then $i(\piC(\alpha),\piC(\beta)) = 0$, since forgetting the punctures only affects the connected components of $S \backslash \{\alpha,\beta\}$ by possibly transforming one of them into a cylinder.
\end{Rem}
\begin{Lema}\label{inter2}
 Let $\alpha$ and $\beta$ be two nonseparating curves in $S_{1}$ that are spherical-Farey neighbours of type A. Then $i(\psi(\alpha),\psi(\beta)) \neq 0 \neq i(\widetilde{\psi}(\alpha),\widetilde{\psi}(\beta))$.
\end{Lema}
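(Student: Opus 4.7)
My plan is to argue by contradiction. Assuming $i(\psi(\alpha), \psi(\beta)) = 0$, I will exploit the type A hypothesis to produce, in $S_{1}$, a common intersector $\gamma$ of $\alpha$ and $\beta$ together with a simultaneous cut-system completion, transport the configuration through $\phi$ and Lemma \ref{lemadefpsi} into $S_{2}$, and derive a topological contradiction there. The statement for $\widetilde{\psi}$ then follows by running the identical argument with $\widetilde{\phi}$ in place of $\phi$, which is an edge-preserving alternating self-map of $\htcomp{S_{1}}$ by Corollary \ref{cortildeedgepreserving}.

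First, I construct a nonseparating curve $\gamma$ in $S_{1}$ with $i(\alpha, \gamma) = i(\beta, \gamma) = 1$: pick two boundary components $\veps_{i}, \veps_{j}$ of the regular neighbourhood $N(\alpha, \beta)$ lying in diagonally opposite regions of $N(\alpha,\beta) \setminus (\alpha \cup \beta)$, so that the minimal interior arc joining them crosses each of $\alpha$ and $\beta$ exactly once; close up using the exterior arc supplied by type A, and choose the exterior arc so that $\gamma$ is nonseparating in $S_{1}$. Next, I find a multicurve $M$ of cardinality $g - 1$, disjoint from $\alpha, \beta, \gamma$, such that $C_{\alpha} \ColonEqq \{\alpha\} \cup M$, $C_{\beta} \ColonEqq \{\beta\} \cup M$ and $C_{\gamma} \ColonEqq \{\gamma\} \cup M$ are all cut systems; the remaining boundary curves $\veps_{k}$ (still nonseparating by type A) together with additional curves in $S_{1} \setminus N(\alpha,\beta)$ provide enough material for every $g \geq 3$. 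By Lemma \ref{lemadefpsi} we have $\phi(C_{\alpha}) = \{\psi(\alpha)\} \cup \psi(M)$, $\phi(C_{\beta}) = \{\psi(\beta)\} \cup \psi(M)$ and $\phi(C_{\gamma}) = \{\psi(\gamma)\} \cup \psi(M)$; edge-preservation applied to the elementary moves $C_{\alpha} \leftrightarrow C_{\gamma}$ and $C_{\beta} \leftrightarrow C_{\gamma}$ gives $i(\psi(\alpha), \psi(\gamma)) = i(\psi(\beta), \psi(\gamma)) = 1$, with $\psi(M)$ disjoint from each of $\psi(\alpha), \psi(\beta), \psi(\gamma)$.

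Under the contrary hypothesis $i(\psi(\alpha), \psi(\beta)) = 0$, the curves $\psi(\alpha)$ and $\psi(\beta)$ are disjoint nonseparating curves in the genus-$1$ surface $S_{2} \setminus \psi(M)$, and $\psi(\gamma)$ meets each of them exactly once. Combining this with the colour structure of $\lk{\phi(C_{\gamma})}$, which by Lemma \ref{colourepa} places $\phi(C_{\alpha})$ and $\phi(C_{\beta})$ in the same colour, will force $\psi(\alpha)$ and $\psi(\beta)$ to coincide up to isotopy, contradicting the fact that they arise from two a priori different elementary moves based at $\phi(C_{\gamma})$. The main obstacle is precisely this last topological step: in $S_{1, m}$ with $m \geq 1$, two distinct disjoint nonseparating simple closed curves sharing a common single-point intersector can coexist, so pinning down the forced isotopy will likely require invoking the alternating hypothesis via a carefully chosen alternating square based at $C_{\gamma}$, or else repeating the construction for a second common intersector $\gamma'$ with a controlled intersection pattern against $\gamma$ to eliminate the remaining freedom in the image.
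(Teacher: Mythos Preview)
Your argument does not close; you have flagged the obstacle yourself, and it is a real one. In the genus-$1$ surface $S_{2}\backslash\psi(M)$ (which has many punctures, at least $2(g-1)$ coming from the cuts), there is no contradiction whatsoever in having two \emph{distinct disjoint} nonseparating curves each meeting $\psi(\gamma)$ once; such configurations exist already in $S_{1,2}$. The colour information from Lemma~\ref{colourepa} tells you only that $\phi(C_{\alpha})\cap\phi(C_{\gamma})=\phi(C_{\beta})\cap\phi(C_{\gamma})=\psi(M)$, which you already knew. Moreover, even the distinctness $\psi(\alpha)\neq\psi(\beta)$ is not secured by your setup: $C_{\alpha}$ and $C_{\beta}$ are \emph{not} adjacent in $\htcomp{S_{1}}$ (since $i(\alpha,\beta)=2$), so edge-preservation gives you no relation between $\phi(C_{\alpha})$ and $\phi(C_{\beta})$, and nothing prevents $\phi(C_{\alpha})=\phi(C_{\beta})$. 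Your suggested fixes (an alternating square at $C_{\gamma}$, or a second common intersector $\gamma'$) would not obviously resolve either issue.

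The paper's proof takes a different route that avoids this trap. First it proves $\psi(\alpha)\neq\psi(\beta)$ (and $\widetilde{\psi}(\alpha)\neq\widetilde{\psi}(\beta)$) using \emph{two separate} intersectors $\gamma,\delta$ with $i(\alpha,\gamma)=i(\beta,\delta)=1$ and $i(\alpha,\delta)=i(\beta,\gamma)=0$, so that the relevant cut systems lie in \emph{different} colours of a common link and the alternating hypothesis forces the images apart. Second, and this is the key idea you are missing, it produces a multicurve $\widetilde P$ such that $\widetilde P\cup\{\alpha\}$ and $\widetilde P\cup\{\beta\}$ are pants decompositions of $S_{1}$ in which no two curves together separate; Lemma~\ref{pantstopants1} then says $\widetilde{\psi}(\widetilde P)\cup\{\widetilde{\psi}(\alpha)\}$ and $\widetilde{\psi}(\widetilde P)\cup\{\widetilde{\psi}(\beta)\}$ are pants decompositions, so $\widetilde{\psi}(\alpha)$ and $\widetilde{\psi}(\beta)$ live in the same complexity-$1$ subsurface of $S_{1}$, where any two distinct curves must intersect. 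Finally the statement for $\psi$ is deduced from that for $\widetilde{\psi}$ via Remark~\ref{pidisjoint}, not the other way round.
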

\begin{proof} This proof is divided in three parts: the first proves that $\psi(\alpha) \neq \psi(\beta)$, the second proves that $\widetilde{\psi}(\alpha) \neq \widetilde{\psi}(\beta)$, and finally the third proves that $i(\psi(\alpha),\psi(\beta)) \neq 0 \neq i(\widetilde{\psi}(\alpha),\widetilde{\psi}(\beta))$.\\
\textit{First part:} Since $\alpha$ and $\beta$ are of type A, we can always find curves $\gamma$ and $\delta$ such that:
 \begin{itemize}
  \item $i(\alpha, \gamma) = i(\beta,\delta) = 1$.
  \item $i(\alpha, \delta) = i(\beta,\gamma) = i(\gamma,\delta) = 0$.
  \item There exists a multicurve $M$ of cardinality $g-2$ such that $C_{1} = \{\alpha,\delta\} \cup M$, $C_{2} = \{\beta,\gamma\} \cup M$ and $C_{0} = \{\gamma,\delta\} \cup M$ are cut systems.
 \end{itemize}
 See Figure \ref{ExampleSphericalFarey1} for a way to obtain them.
 \begin{figure}[h]
 \begin{center}
  \includegraphics[width=8cm]{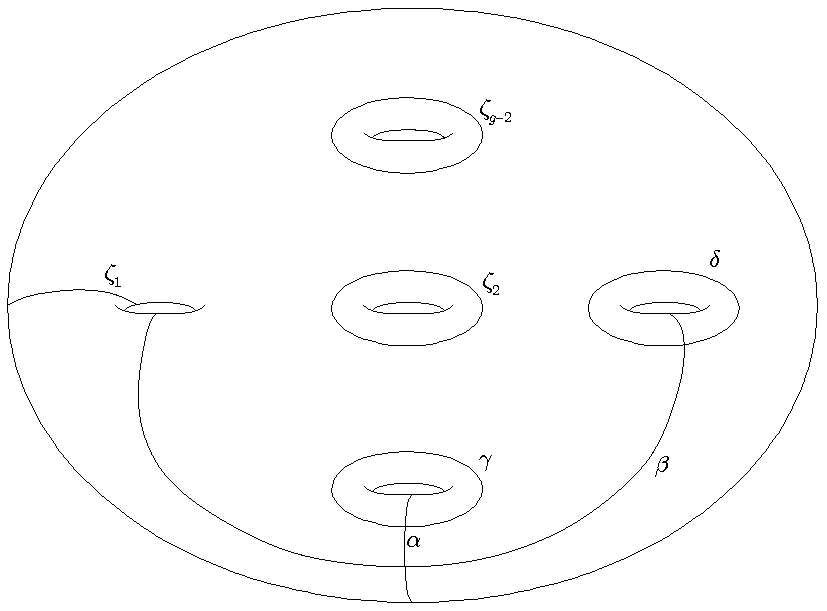}\caption{$M = \{\zeta_{1}, \ldots, \zeta_{g-2}\}$ is a multicurve such that $\{\alpha,\delta\} \cup M$, $\{\beta,\gamma\} \cup M$ and $\{\gamma,\delta\} \cup M$ are cut systems.}\label{ExampleSphericalFarey1}
 \end{center}
 \end{figure}\\
 \indent Then $C_{1}$, $C_{2} \in \lk{C_{0}}$ and $C_{1} \ncol{C_{0}} C_{2}$, so by definition and Lemma \ref{colourepa} $\phi(C_{1}), \phi(C_{2}) \in \lk{\phi(C_{0})}$ and $\phi(C_{1}) \ncol{\phi(C_{0})} \phi(C_{2})$; by Remark \ref{g-2curves} $\phi(C_{1})$ and $\phi(C_{2})$ share exactly $g-2$ curves, thus $\psi(\alpha) \neq \psi(\beta)$.\\
 \textit{Second part:} Using the cut systems $C_{1}$, $C_{2}$ and $C_{0}$ from the first part of this proof, we can then apply Corollary \ref{cordefpsitilde}, thus getting that $\widetilde{\psi}(\alpha)$ is disjoint from $\widetilde{\psi}(\delta)$ while $i(\widetilde{\psi}(\beta), \widetilde{\psi}(\delta)) = 1$. Then $\widetilde{\psi}(\alpha) \neq \widetilde{\psi}(\beta)$.\\
 \textit{Third part:} Let $\widetilde{P}$ be a multicurve such that $P_{1} = \widetilde{P} \cup \{\alpha\}$ and $P_{2} = \widetilde{P} \cup \{\beta\}$ are pants decompositions such that for $i = 1,2$, any two curves of $P_{i}$ do not separate the surface (see Figure \ref{ExampleSphericalFarey2-3} for an example). By Lemma \ref{pantstopants1} then $\widetilde{\psi}(P_{1})$ and $\widetilde{\psi}(P_{2})$ are pants decompositions of $S_{1}$ and, by the above paragraph, will differ in exactly one curve, $\widetilde{\psi}(\alpha)$ and $\widetilde{\psi}(\beta)$, meaning that they are contained in a complexity-one subsurface of $S_{1}$; given that by the second part of the proof, these two curves are different and yet they are contained in a subsurface of complexity one, we have that $i(\widetilde{\psi}(\alpha),\widetilde{\psi}(\beta)) \neq 0$. Given that $\widetilde{\psi} = \piC \circ \psi$, by Remark \ref{pidisjoint} we have that $i(\psi(\alpha),\psi(\beta)) \neq 0$.
\end{proof}
\begin{figure}[h]
\begin{center}
 \includegraphics[width=7cm]{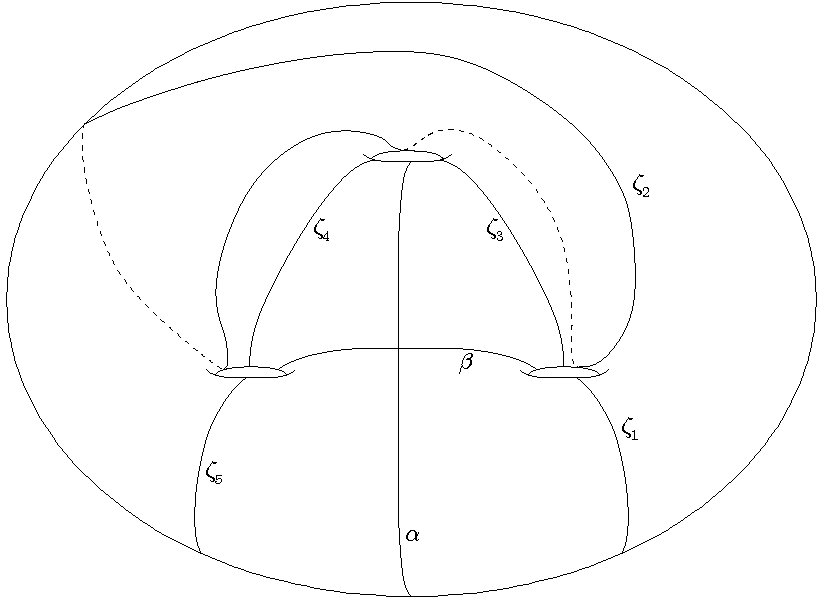} \hspace{1cm} \includegraphics[width=7cm]{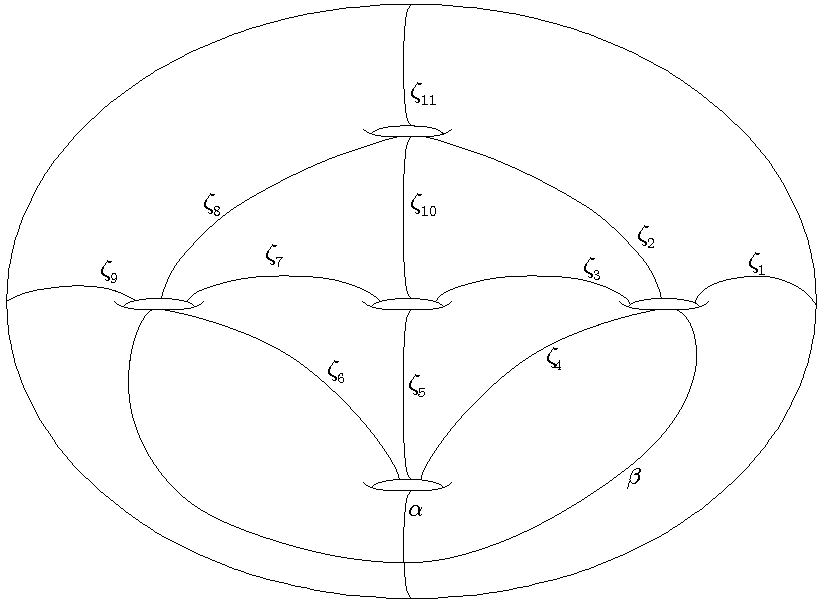} \caption{$\widetilde{P} = \{\zeta_{1},\ldots\}$ is a multicurve such that $P_{1} = \widetilde{P} \cup \{\alpha\}$ and $P_{2} = \widetilde{P} \cup \{\beta\}$ are pants decompositions such that for $i = 1,2$, any two curves of $P_{i}$ do not separate the surface.} \label{ExampleSphericalFarey2-3}
\end{center}
\end{figure}
\indent A \textit{halving multicurve} of a surface $S = S_{g,n}$ is a multicurve $H$ whose elements are nonseparating curves on $S$ such that: $S \backslash H = Q_{1} \sqcup Q_{2}$, with $Q_{1}$ and $Q_{2}$ homeomorphic to $S_{0,n_{1}}$ and $S_{0,n_{2}}$ respectively, and $n_{1} + n_{2} = 2(g+1) + n$. Note that a halving multicurve has exactly $g+1$ elements.\\
\indent We define a \textit{cutting halving multicurve} as a halving multicurve such that \textbf{any} $g$ elements of it form a cut system. Note that there exist halving multicurves that are not cutting halving multicurves, see Figure \ref{ExampleHalvingNoncutting} for an example.
\begin{figure}[h]
 \begin{center}
  \resizebox{12cm}{!}{\input{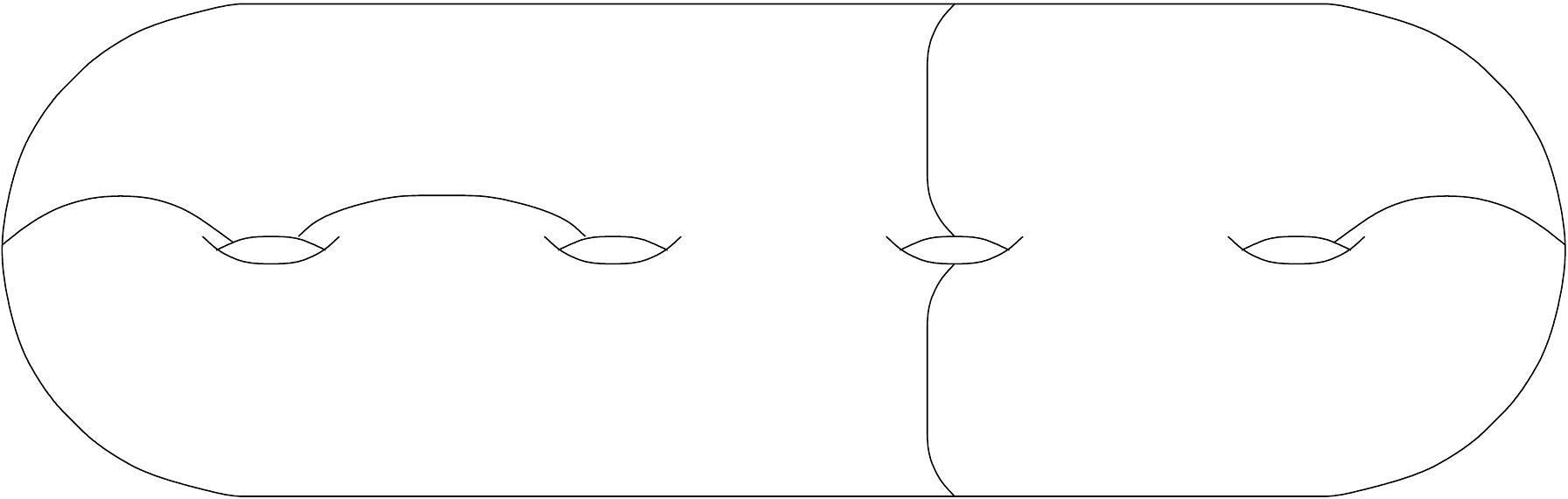_t}} \caption{An example of a halving multicurve that is not a cutting halving multicurve.} \label{ExampleHalvingNoncutting}
 \end{center}
\end{figure}
\begin{Lema}\label{cuthalfmulti}
 If $H$ is a cutting halving multicurve of $S_{1}$, then $\psi(H)$ and $\widetilde{\psi}(H)$ are cutting halving multicurves of $S_{2}$ and $S_{1}$ respectively.
\end{Lema}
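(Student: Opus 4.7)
Write $H = \{\alpha_0, \ldots, \alpha_g\}$ and, for each index $i$, set $H_i \ColonEqq H \setminus \{\alpha_i\}$, which by hypothesis is a cut system of $S_1$. The strategy is to verify both defining conditions of a cutting halving multicurve for $\psi(H)$ directly from Lemma \ref{lemadefpsi} and the remark following it, and then to observe that the very same argument, with Corollary \ref{cordefpsitilde} in place of Lemma \ref{lemadefpsi}, handles $\widetilde{\psi}(H)$.

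I would first establish the \emph{cutting} property. The remark following Lemma \ref{lemadefpsi} gives $\phi(H_i) = \{\psi(\alpha_j) : j \neq i\}$, which is a cut system of $S_2$ because $\phi$ is simplicial. This immediately yields three facts: each $\psi(\alpha_i)$ is nonseparating in $S_2$; the curves $\psi(\alpha_0), \ldots, \psi(\alpha_g)$ are pairwise distinct (by Lemma \ref{lemadefpsi}(2) applied with any $H_k$ containing both $\alpha_i$ and $\alpha_j$ for $k \neq i,j$) and pairwise disjoint (both lie in the multicurve $\phi(H_k)$); and every $g$-element subset of $\psi(H)$ is a cut system of $S_2$. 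Hence $\psi(H)$ is a multicurve of cardinality $g+1$ satisfying the cutting condition.

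For the \emph{halving} property I would appeal to the topological fact that in a connected genus-zero surface every essential simple closed curve separates into two genus-zero pieces, which is a routine consequence of the Jordan curve theorem applied to an embedding into the sphere. Cutting $S_2$ along the cut system $\phi(H_0)$ produces a connected genus-zero surface, and $\psi(\alpha_0)$ sits inside it as an essential simple closed curve, since it is essential in $S_2$ and distinct from each $\psi(\alpha_i)$ with $i \geq 1$. Cutting further along $\psi(\alpha_0)$ therefore splits this surface into two genus-zero components, so $S_2 \setminus \psi(H)$ is a disjoint union of two planar surfaces, which is exactly the halving condition. The argument for $\widetilde{\psi}(H)$ is word-for-word identical, with Corollary \ref{cordefpsitilde} replacing Lemma \ref{lemadefpsi} and $\widetilde{\phi}$ replacing $\phi$; Corollary \ref{cortildeedgepreserving} ensures that $\widetilde{\phi}$ is edge-preserving, so that each $\widetilde{\phi}(H_i)$ is a cut system of $S_1$. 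The only delicate point, beyond this bookkeeping, is the planar separation fact, which is standard.
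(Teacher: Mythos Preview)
Your proposal is correct and follows essentially the same approach as the paper's proof: both use Lemma \ref{lemadefpsi} and Corollary \ref{cordefpsitilde} to conclude that $\psi(H)$ and $\widetilde{\psi}(H)$ are multicurves of cardinality $g+1$ any $g$ of whose elements form a cut system, and then deduce the halving property from this. Your version is more explicit about the latter step---cutting along one cut system and then separating the resulting planar surface with the remaining curve---whereas the paper simply asserts the conclusion, but the underlying argument is the same.
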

\begin{proof}
 Since $H$ is a cutting halving multicurve of $S_{1}$ then, by a repeated use of Lemma \ref{lemadefpsi} and Corollary \ref{cordefpsitilde}, $\psi(H)$ and $\widetilde{\psi}(H)$ will contain $g+1$ elements and any $g$ elements of $\psi(H)$ and $\widetilde{\psi}(H)$ will form cut systems. Therefore $S_{2} \backslash \psi(H)$ and $S_{1} \backslash \widetilde{\psi}(H)$ will have two connected components, each of genus zero; thus $\psi(H)$ and $\widetilde{\psi}(H)$ are cutting halving multicurves of $S_{2}$ and $S_{1}$ respectively.
\end{proof}
\begin{Lema}\label{inter0v2}
 Let $\alpha$ and $\beta$ be two disjoint nonseparating curves such that $S_{1} \backslash \{\alpha,\beta\}$ is disconnected. Then $\psi(\alpha)$ and $\psi(\beta)$ are disjoint in $S_{2}$ and $\widetilde{\psi}(\alpha)$ and $\widetilde{\psi}(\beta)$ are disjoint in $S_{1}$.
\end{Lema}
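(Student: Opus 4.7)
My plan is to construct three related cut systems $C_\alpha$, $C_\beta$, $C_\gamma$ of $S_1$ sharing a common multicurve $M$ of cardinality $g-1$ so that $C_\alpha$ and $C_\beta$ lie in the same colour class of $\lk{C_\gamma}$, transfer this relation to $\htcomp{S_2}$ via Lemma \ref{colourepa}, and then extract disjointness of $\psi(\alpha)$ and $\psi(\beta)$; the statement for $\widetilde{\psi}$ will then follow by applying $\piC$ and invoking Remark \ref{pidisjoint}.

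First I exploit the cobounding structure. Since $\alpha$ and $\beta$ are disjoint nonseparating curves that together separate $S_1$, they cobound two subsurfaces $P_1, P_2 \subset S_1$ of positive genera $g_1, g_2 \geq 1$ with $g_1 + g_2 = g - 1$. In each $P_i$ I choose a system $M_i$ of $g_i$ disjoint nonseparating curves rendering $P_i \setminus M_i$ planar, and set $M \ColonEqq M_1 \cup M_2$; both $C_\alpha \ColonEqq M \cup \{\alpha\}$ and $C_\beta \ColonEqq M \cup \{\beta\}$ are then cut systems of $S_1$. Next I construct a simple closed curve $\gamma$ by concatenating an arc from $\alpha$ to $\beta$ in $P_1 \setminus M_1$ with one from $\beta$ to $\alpha$ in $P_2 \setminus M_2$; this yields $i(\alpha,\gamma) = i(\beta,\gamma) = 1$ with $\gamma$ disjoint from $M$, and a direct Euler-characteristic count shows that $C_\gamma \ColonEqq M \cup \{\gamma\}$ is also a cut system (its complement in $S_1$ is connected of genus zero).

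Since $C_\alpha, C_\beta \in \lk{C_\gamma}$ with $C_\alpha \cap C_\gamma = M = C_\beta \cap C_\gamma$, we have $C_\alpha \col{C_\gamma} C_\beta$. Lemma \ref{colourepa} then yields $\phi(C_\alpha) \col{\phi(C_\gamma)} \phi(C_\beta)$, forcing $\psi(M) \subset \phi(C_\alpha) \cap \phi(C_\beta)$ and hence $\phi(C_\alpha) = \psi(M) \cup \{\psi(\alpha)\}$, $\phi(C_\beta) = \psi(M) \cup \{\psi(\beta)\}$ as cut systems of $S_2$. In particular both $\psi(\alpha)$ and $\psi(\beta)$ are disjoint in $S_2$ from every curve of $\psi(M)$. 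Moreover, applying the alternating property of $\phi$ to the pair $(C_\alpha, C_\beta) \in \lk{C_\gamma}$ — which differ in exactly two curves by Remark \ref{g-2curves} — forces $\phi(C_\alpha)$ and $\phi(C_\beta)$ to also differ in exactly two curves, yielding $\psi(\alpha) \neq \psi(\beta)$.

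The main obstacle is to rule out $i(\psi(\alpha), \psi(\beta)) = 1$. Here I would exploit the fact that $M \cup \{\alpha,\beta\}$ is a halving multicurve of $S_1$, with planar complement $(P_1 \setminus M_1) \sqcup (P_2 \setminus M_2)$. My approach is to build two alternating squares in $\htcomp{S_1}$, each combining a single handle swap within $M$ with the moves $\gamma \leftrightarrow \alpha$ and $\gamma \leftrightarrow \beta$ respectively, and transport them to alternating squares in $\htcomp{S_2}$ via Lemma \ref{colourepa}; the hypothetical adjacency $i(\psi(\alpha), \psi(\beta)) = 1$ would then make $\{\phi(C_\alpha), \phi(C_\beta), \phi(C_\gamma)\}$ a triangle in $\htcomp{S_2}$ whose common $(g-1)$-multicurve from Lemma \ref{multicurveintriangle} is $\psi(M)$, and comparing with the two alternating squares forces a structural degeneracy incompatible with the images remaining alternating squares. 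Once disjointness of $\psi(\alpha)$ and $\psi(\beta)$ in $S_2$ is established, the corresponding statement for $\widetilde{\psi}$ in $S_1$ follows by applying $\piC$: Remark \ref{pidisjoint} preserves vanishing of the intersection number for nonseparating curves, and a short subsurface argument in the spirit of Corollary \ref{cordefpsitilde}(2) — placing $\psi(\alpha)$ and $\psi(\beta)$ inside a subsurface of $S_2$ homeomorphic to $S_{2,1}$ whose boundary separates $S_2$ into two positive-genus pieces — shows that $\piC$ preserves their distinctness.
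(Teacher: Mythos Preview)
Your setup with the three cut systems $C_\alpha, C_\beta, C_\gamma$ is fine, but the argument breaks at the step where you try to show $\psi(\alpha)\neq\psi(\beta)$. You write that $C_\alpha$ and $C_\beta$ ``differ in exactly two curves by Remark \ref{g-2curves}'' and then invoke the alternating property. This is backwards: you just established $C_\alpha \col{C_\gamma} C_\beta$, i.e.\ they are in the \emph{same} colour, so $C_\alpha\cap C_\beta=M$ has cardinality $g-1$ and they differ in exactly \emph{one} curve. Remark \ref{g-2curves} and the alternating hypothesis apply only to pairs in \emph{distinct} colours. So nothing here prevents $\psi(\alpha)=\psi(\beta)$; the paper handles this by an entirely different device, producing a curve $\gamma$ that is a spherical-Farey neighbour of type~A with $\beta$ but shares a cut system with $\alpha$, so that Lemma \ref{inter2} forces $i(\psi(\beta),\psi(\gamma))\neq 0=i(\psi(\alpha),\psi(\gamma))$.

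The second and more serious gap is in the intersection argument. You phrase the obstacle as ruling out $i(\psi(\alpha),\psi(\beta))=1$, but that is not the only bad case: $\psi(\alpha)$ and $\psi(\beta)$ are nonseparating curves in the genus-one, many-punctured surface $S_2\setminus\psi(M)$, and nothing so far bounds their intersection by $1$. Your alternating-square sketch, even granting all the vagueness about which ``structural degeneracy'' you intend, could at best produce a contradiction from $\{\phi(C_\alpha),\phi(C_\beta),\phi(C_\gamma)\}$ being a triangle, and that only happens when the intersection equals $1$; intersection $2,3,\dots$ is completely unaddressed. The paper's proof is not local in this sense: it separates $\psi(\alpha)$ from $\psi(\beta)$ by trapping them in different components of $S_2\setminus\psi(H)$ for a cutting halving multicurve $H$ (Lemma \ref{cuthalfmulti}), and then uses a counting argument with an auxiliary multicurve of $g-2$ type-A spherical-Farey neighbours of $\beta$ to reach a contradiction if both lie on the same side. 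Your triangle/square machinery does not substitute for this separation argument.
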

\begin{proof}
 \textit{We claim} $\psi(\alpha) \neq \psi(\beta)$ \textit{and} $\widetilde{\psi}(\alpha) \neq \widetilde{\psi}(\beta)$.\\
 \indent Given the conditions, let $\gamma$ be a nonseparating curve such that $\beta$ and $\gamma$ are spherical-Farey neighbours of type A, $\alpha$ and $\gamma$ are disjoint, and $S_{1} \backslash \{\alpha,\gamma\}$ is connected; then, by Lemmas \ref{lemadefpsi}, \ref{inter2} and Corollary \ref{cordefpsitilde}, $i(\psi(\alpha),\psi(\gamma)) = i(\widetilde{\psi}(\alpha),\widetilde{\psi}(\gamma)) = 0$ and $i(\psi(\beta),\psi(\gamma)) \neq 0 \neq i(\widetilde{\psi}(\beta),\widetilde{\psi}(\gamma))$. Therefore $\psi(\alpha) \neq \psi(\beta)$ and $\widetilde{\psi}(\alpha) \neq \widetilde{\psi}(\beta)$.\\[0.3cm]
 \textit{We claim} $i(\psi(\alpha),\psi(\beta)) = i(\widetilde{\psi}(\alpha),\widetilde{\psi}(\beta)) = 0$.\\
 \indent Let $H$ be a cutting halving multicurve in $S_{1}$ such that $\alpha$ is contained in $S_{1}^{\prime}$ and $\beta$ is contained in $S_{1}^{\biprime}$, where $S_{1}^{\prime}$ and $S_{1}^{\biprime}$ are the connected components of $S_{1} \backslash H$, and also such that $S_{1} \backslash \{\alpha,\gamma\}$ and $S_{1} \backslash \{\beta,\gamma\}$ are connected for all $\gamma \in H$. By Lemma \ref{cuthalfmulti} $\psi(H)$ is a cutting halving multicurve; let $S_{2}^{\prime}$ and $S_{2}^{\biprime}$ be the corresponding connected components of $S_{2} \backslash \psi(H)$. See Figure \ref{ExampleMulticurve1-2} for examples. By construction $\psi(\alpha)$ and $\psi(\beta)$ are disjoint from every element in $\psi(H)$, so they are curves contained in $S_{2} \backslash \psi(H)$.\\
 \indent If $\psi(\alpha)$ and $\psi(\beta)$ are in different connected components of $S_{2} \backslash \psi(H)$ then they are disjoint. So, suppose (without loss of generality) that both representatives are in $S_{2}^{\biprime}$.\\
 \indent Let $M$ be a multicurve of $S_{1}$ with the following properties.
 \begin{enumerate}
  \item Every element of $M$ is also a curve contained in $S_{1}^{\biprime}$ and $S_{1} \backslash \{\gamma,\delta\}$ is connected for all $\gamma,\delta \in M$.
  \item $S_{1} \backslash \{\gamma,\delta\}$ is connected for all $\gamma \in M$ and all $\delta \in H$.
  \item For all $\gamma \in M$, $\beta$ and $\gamma$ are spherical-Farey neighbours of type A.
  \item For all $\gamma \in M$, $S_{1} \backslash \{\alpha,\gamma\}$ is connected.
  \item $M$ has $g-2$ elements.
 \end{enumerate}
 \indent See Figure \ref{ExampleMulticurve1-2} for an example. By Lemma \ref{lemadefpsi}, $\psi(M)$ satisfy conditions 1, 2, 4 and 5; also, by Lemma \ref{inter2}, we have that for all $\gamma \in M$, $i(\psi(\beta),\psi(\gamma)) \neq 0$. This implies that every element of $\psi(M)$ is a curve contained in $S_{2}^{\biprime}$; thus $\psi(\gamma)$ intesects $\psi(\beta)$ at least twice for all $\gamma \in M$ (since $S_{2}^{\biprime}$ has genus zero, every curve contained in it is separating in $S_{2}^{\biprime}$).
 \begin{figure}[h]
 \begin{center}
  \includegraphics[height=4cm]{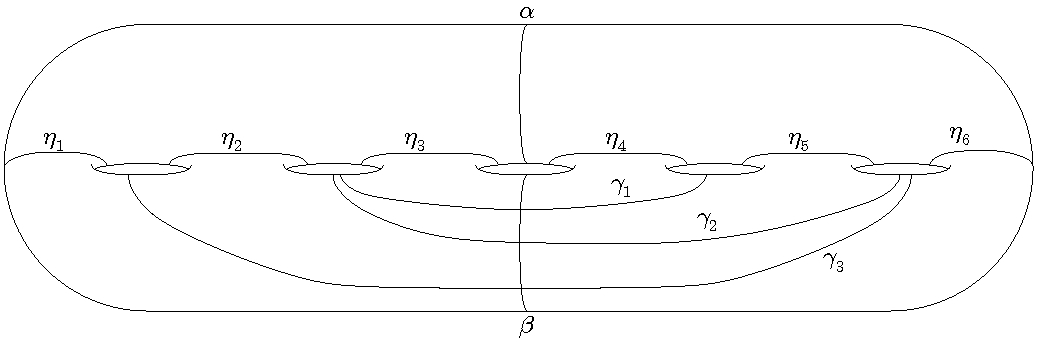}\\
  \includegraphics[height=4cm]{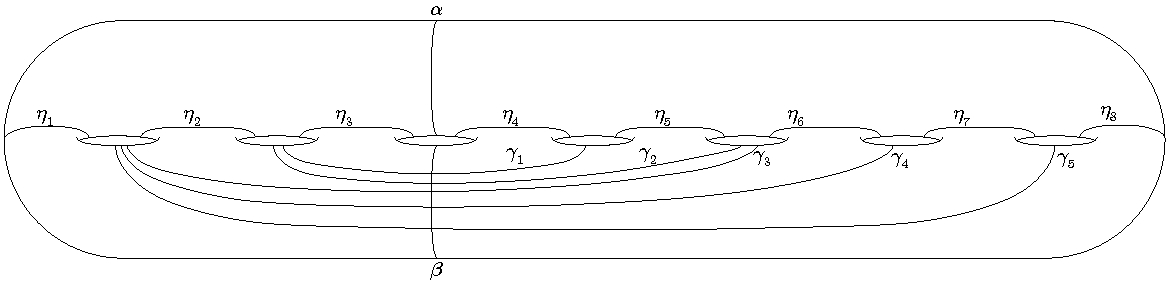} \caption{The cutting halving multicurve $H = \{\eta_{1}, \ldots, \eta_{g+1}\}$, the multicurve $M = \{\gamma_{1},\ldots,\gamma_{g-2}\}$, and the spherical-Farey neighbours $\alpha$ and $\beta$ for the closed surfaces of genus $5$ (above) and genus $7$ (below).}\label{ExampleMulticurve1-2}
 \end{center}
 \end{figure}\\
 \indent Let $U$ and $V$ be the connected components of $S_{2}^{\biprime} \backslash \{\psi(\alpha)\}$.\\
 \indent Now, we prove by contradiction that the elements of $\psi(M)$ are either all in $U$ or all in $V$: Let $\gamma, \gamma^{\prime} \in M$ be such that $\psi(\gamma)$ is contained in $U$ and $\psi(\gamma^{\prime})$ is contained in $V$. Then we can always find a curve $\delta$ contained in $S_{1}^{\biprime}$ such that the elements of $\{\gamma,\delta\}$ and of $\{\gamma^{\prime},\delta\}$ satisfy the conditions of Lemma \ref{inter2}, and such that $S_{1} \backslash \{\delta,\delta^{\prime}\}$ is connected for all $\delta^{\prime} \in H \cup \{\alpha\}$. This implies $i(\psi(\gamma),\psi(\delta)) \neq 0 \neq i(\psi(\gamma^{\prime}),\psi(\delta))$, and that $\psi(\delta)$ has to be either in $U$ or $V$. These two conditions together imply that $\psi(\delta)$ is contained in both $U$ and $V$, which is a contradiction.\\
 \indent Therefore, $\psi(M)$ consists of $g-2$ nonseparating curves, no two of which separate $S_{2}$, and (up to relabelling) all these nonseparating curves are disjointly contained in $U$. But $U$ can have at most $g-3$ nonseparating (in $S_{2}$) curves that no pair of which separates $S_{2}$ (this number is actually the greatest possible cardinality of a punctured pants decomposition of $U$); so we have found a contradiction and thus $\psi(\alpha)$ and $\psi(\beta)$ are in different connected components and then $i(\psi(\alpha),\psi(\beta)) = 0$.\\
 \indent By Remark \ref{pidisjoint}, since $i(\psi(\alpha),\psi(\beta)) = 0$, then $i(\widetilde{\psi}(\alpha),\widetilde{\psi}(\beta)) = 0$.
\end{proof}
Thus, by using Lemmas \ref{lemadefpsi}, \ref{inter0v2} and Corollary \ref{cordefpsitilde}, we obtain the following corollary.
\begin{Cor}\label{halfmultipants}
 $\psi$ and $\widetilde{\psi}$ preserve both disjointness and intersection $1$.
\end{Cor}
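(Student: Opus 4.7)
The plan is to split the statement into two claims, disjointness preservation and intersection-one preservation, and to handle the second claim first since it is already implicit in the preceding material. Indeed, Lemma \ref{lemadefpsi}(3) already asserts that $i(\alpha,\beta) = 1$ forces $i(\psi(\alpha),\psi(\beta)) = 1$, and Corollary \ref{cordefpsitilde}(3) gives the corresponding statement for $\widetilde{\psi}$. So for this half of the corollary I would simply cite those results and be done.

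For disjointness, I would start from two distinct disjoint nonseparating curves $\alpha, \beta$ on $S_{1}$ and distinguish cases according to whether $S_{1} \backslash \{\alpha,\beta\}$ is connected or disconnected. If it is connected, then $\{\alpha,\beta\}$ can be extended to a cut system $C$ of $S_{1}$. Applying parts (1) and (2) of Lemma \ref{lemadefpsi} to $C$ places $\psi(\alpha)$ and $\psi(\beta)$ as distinct elements of the cut system $\phi(C)$ of $S_{2}$; since the curves in a cut system are pairwise disjoint (and by the paper's convention distinct), this is precisely disjointness of $\psi(\alpha)$ and $\psi(\beta)$. The analogous argument using parts (1) and (2) of Corollary \ref{cordefpsitilde} delivers disjointness of $\widetilde{\psi}(\alpha)$ and $\widetilde{\psi}(\beta)$ in $S_{1}$.

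The remaining case, where $S_{1} \backslash \{\alpha,\beta\}$ is disconnected, falls exactly under the hypothesis of Lemma \ref{inter0v2}, which directly provides both $i(\psi(\alpha),\psi(\beta)) = 0$ with $\psi(\alpha) \neq \psi(\beta)$ and the corresponding statement for $\widetilde{\psi}$. Since the two cases are exhaustive, the corollary is established.

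The genuinely hard part of this corollary has already been paid for in Lemma \ref{inter0v2}, where the cutting halving multicurve machinery, type-A spherical-Farey neighbours, and a rather careful counting argument in the genus-zero component $U$ were needed to rule out $\psi(\alpha)$ and $\psi(\beta)$ lying on the same side of the halving multicurve. Once that lemma is in hand, the present corollary is purely a case-combination, so I anticipate no real obstacle at this stage beyond the careful bookkeeping of the two topological cases for $S_{1} \backslash \{\alpha,\beta\}$.
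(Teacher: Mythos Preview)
Your proposal is correct and matches the paper's approach exactly: the paper records this corollary simply as a consequence of Lemmas \ref{lemadefpsi}, \ref{inter0v2} and Corollary \ref{cordefpsitilde}, and your case split (connected versus disconnected complement of $\{\alpha,\beta\}$, together with the already-established intersection-one statements) is precisely how those three results combine to give the conclusion.
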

\subsection{Inducing $\fun{\widehat{\psi}}{\ccomp{S_{1}}}{\ccomp{S_{1}}}$}\label{chap4sec3subsec2}
\indent To extend $\widetilde{\psi}$, we proceed in the same way as Irmak in \cite{Irmak3}, using chains and the fact that every separating curve in $S_{1}$ is the boundary curve of a closed neighbourhood of a chain.\\
\indent Using Lemmas \ref{lemadefpsi}, \ref{inter0v2} and Corollary \ref{cordefpsitilde}, we obtain the following lemma.
\begin{Lema}\label{chainstochains}
 If $X$ is a chain of length $k$, then $\psi(X)$ and $\widetilde{\psi}(X)$ are chains of length $k$.
\end{Lema}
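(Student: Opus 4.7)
The plan is to verify directly each of the three defining properties of a chain, using the combined statement of Corollary \ref{halfmultipants} that both $\psi$ and $\widetilde{\psi}$ preserve disjointness and intersection number $1$ of nonseparating curves. Writing $X = (\alpha_{1},\ldots,\alpha_{k})$, I will show that $\psi(X) = (\psi(\alpha_{1}),\ldots,\psi(\alpha_{k}))$ and $\widetilde{\psi}(X) = (\widetilde{\psi}(\alpha_{1}),\ldots,\widetilde{\psi}(\alpha_{k}))$ each satisfy the chain axioms.

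The first property, that each entry is a nonseparating curve, is built into the definitions: $\psi$ takes values in $\mathcal{V}(\gcomp{S_{2}})$ and $\widetilde{\psi}$ takes values in $\mathcal{V}(\gcomp{S_{1}})$, so every $\psi(\alpha_{i})$ and $\widetilde{\psi}(\alpha_{i})$ is by construction a vertex of a Schmutz graph, hence a nonseparating curve. For the second property, consecutive entries of $X$ satisfy $i(\alpha_{i},\alpha_{i+1}) = 1$, so the ``intersection $1$'' part of Corollary \ref{halfmultipants} gives $i(\psi(\alpha_{i}),\psi(\alpha_{i+1})) = 1$ and the analogous statement for $\widetilde{\psi}$. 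For the third, any pair $\alpha_{i},\alpha_{j}$ with $|i-j|\geq 2$ is a pair of disjoint curves (distinct and with zero geometric intersection, in the paper's convention), so the disjointness-preserving part of Corollary \ref{halfmultipants} yields that $\psi(\alpha_{i}),\psi(\alpha_{j})$ and $\widetilde{\psi}(\alpha_{i}),\widetilde{\psi}(\alpha_{j})$ are disjoint as well.

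The only remaining point is that the resulting chains have length exactly $k$, i.e. that the $k$ images are pairwise distinct. Consecutive images are distinct because they intersect once (a curve cannot intersect itself exactly once), and non-consecutive images are distinct because ``disjoint'' in this paper already carries the clause $\alpha\neq\beta$ in its definition. There is no genuine obstacle here: all of the conceptual work has been absorbed into Lemma \ref{lemadefpsi}, Lemma \ref{inter0v2}, and Corollary \ref{cordefpsitilde}, whose conclusions are packaged in Corollary \ref{halfmultipants}, so the proof of Lemma \ref{chainstochains} is a short assembly of these facts rather than a new argument.
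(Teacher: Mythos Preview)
Your proof is correct and matches the paper's own argument: the paper simply states that the lemma follows from Lemmas \ref{lemadefpsi}, \ref{inter0v2} and Corollary \ref{cordefpsitilde}, which are precisely the ingredients packaged into Corollary \ref{halfmultipants} that you invoke. Your explicit verification of each chain axiom (nonseparating images, consecutive intersection $1$, non-consecutive disjointness, and distinctness) just spells out what the paper leaves implicit.
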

Since $S_{1}$ is a closed surface, then every separating curve $\alpha$ on $S_{1}$ can be characterized as the boundary curve of a closed regular neighbourhood of a chain $X_{\alpha}$. See Figure \ref{ExampleDefChain} for an example. We call $X_{\alpha}$ \textit{a defining chain of} $\alpha$. Recall that every defining chain of a separating curve always has even cardinality, $2k$, and its closed regular neighbourhood will then have genus $k$.
\begin{figure}[h]
\begin{center}
 \includegraphics[width=12cm]{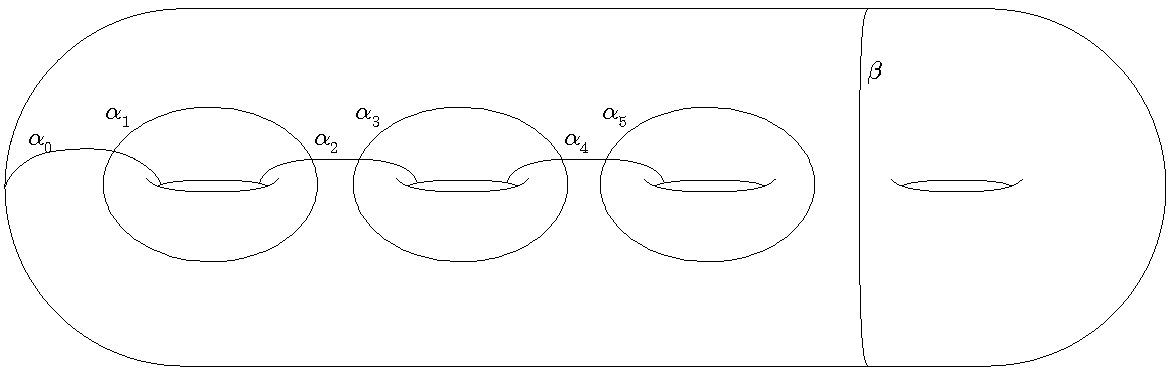} \caption{$\{\alpha_{0}, \ldots, \alpha_{5}\}$ is a defining chain of the separating curve $\beta$.} \label{ExampleDefChain}
\end{center}
\end{figure}
\begin{Lema}\label{differentchains}
 Let $\beta_{1}$ and $\beta_{2}$ be separating curves in $S_{1}$, and $X_{1}$ and $X_{2}$ be defining chains of $\beta_{1}$ and $\beta_{2}$ respectively. If $\beta_{1} = \beta_{2}$, then either every element of $X_{1}$ is disjoint from every element of $X_{2}$ and viceversa, or every curve in $X_{1}$ intersects at least one curve in $X_{2}$ and viceversa.
\end{Lema}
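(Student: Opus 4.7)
The plan is to exploit that $\beta \ColonEqq \beta_{1} = \beta_{2}$ separates $S_{1}$ into two connected subsurfaces $\Sigma_{A}$ and $\Sigma_{B}$, of genera $k_{A}$ and $k_{B}$ with $k_{A} + k_{B} = g$. The key preliminary step is to show that any defining chain $X$ of $\beta$ is entirely contained in one of these two subsurfaces: the closed regular neighbourhood $\overline{N(X)}$ is a connected subsurface of $S_{1}$ with a single boundary component, namely $\beta$, and since $\beta$ is separating, $\overline{N(X)}$ must be isotopic to either $\Sigma_{A}$ or $\Sigma_{B}$ (concretely, the one whose genus matches $|X|/2$). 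Hence each of $X_{1}, X_{2}$ lies in exactly one of $\Sigma_{A}, \Sigma_{B}$.

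If $X_{1}$ and $X_{2}$ lie on different sides, say $X_{1} \subset \Sigma_{A}$ and $X_{2} \subset \Sigma_{B}$, then the two chains live in disjoint subsurfaces of $S_{1}$, so every element of $X_{1}$ is disjoint from every element of $X_{2}$ and vice versa. This gives the first alternative of the dichotomy.

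If both chains lie on the same side, say in $\Sigma_{A}$ of genus $k_{A}$, then in particular $|X_{1}| = |X_{2}| = 2k_{A}$. I would argue by contradiction: suppose some $\alpha \in X_{1}$ is disjoint from every curve of $X_{2}$. Then $\alpha$ can be isotoped into $\Sigma_{A} \backslash \mathrm{int}(\overline{N(X_{2})})$. But $\overline{N(X_{2})}$ and $\Sigma_{A}$ both have genus $k_{A}$ and a single boundary component (namely $\beta$), so their difference is a genus-zero surface with two boundary components, i.e.\ an annulus whose core is isotopic to $\beta$. Thus $\alpha$ would be isotopic to $\beta$, contradicting the fact that $\alpha$ is nonseparating while $\beta$ is separating. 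By symmetry, the same argument (swapping the roles of $X_{1}$ and $X_{2}$) shows that every curve of $X_{2}$ intersects at least one curve of $X_{1}$, yielding the second alternative.

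The main subtlety will be the preliminary identification of $\overline{N(X)}$ (up to isotopy) with one of the two sides of $\beta$; this rests on the fact that the regular neighbourhood of a chain is a connected subsurface whose genus and boundary count pin down its topological type inside $S_{1}$. Once that is in place, both alternatives follow from elementary annulus arguments combined with the nonseparating nature of chain curves.
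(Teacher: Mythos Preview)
Your proposal is correct and follows essentially the same approach as the paper's own proof: both observe that each defining chain, being disjoint from $\beta$, lies entirely in one of the two components of $S_{1}\backslash\{\beta\}$, and then split into the different-sides case (disjointness is immediate) and the same-side case. The paper phrases the same-side argument tersely as ``$X_{1}$ fills its regular neighbourhood,'' while you unpack the equivalent statement into the explicit annulus-complement contradiction; these are the same observation.
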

\begin{proof}
 Since every element in $X_{1}$ and $X_{2}$ is by definition disjoint from $\beta = \beta_{1} = \beta_{2}$, then all the elements in $X_{1}$ are contained in the same connected component of $S_{1} \backslash \{\beta\}$, and analogously with all the elements of $X_{2}$. If the elements of $X_{2}$ are in a different connected component from those of $X_{1}$ then every element of $X_{1}$ is disjoint from every element of $X_{2}$ and viceversa. If the elements of $X_{2}$ are in the same connected component as those of $X_{1}$, since $X_{1}$ fills its regular neighbourhood we have that every curve in $X_{1}$ intersects at least one curve in $X_{2}$ and viceversa.
\end{proof}
\indent To extend the definition of $\widetilde{\psi}$ to $\ccomp{S}$, we define $\widehat{\psi}$ as follows: If $\alpha$ is a nonseparating curve, then $\widehat{\psi}(\alpha) = \widetilde{\psi}(\alpha)$; if $\alpha$ is a separating curve, let $X_{\alpha}$ be a defining chain of $\alpha$ and then we define $\widehat{\psi}(\alpha)$ as the boundary curve of a regular neighbourhood of $\widetilde{\psi}(X_{\alpha})$. This makes sense given that the regular neighbourhoods of $X_{\alpha}$ are all isotopic, and thus the boundary curves of any two regular neighbourhoods are isotopic.\\
\begin{Lema}\label{HTwelldefined}
 The map $\widehat{\psi}$ is well-defined.
\end{Lema}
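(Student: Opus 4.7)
The plan is as follows. For $\alpha$ nonseparating, $\widehat{\psi}(\alpha) = \widetilde{\psi}(\alpha)$ is already well-defined, so the only task is to verify, for a separating $\alpha$, that the isotopy class of the boundary curve of a regular neighbourhood of $\widetilde{\psi}(X_{\alpha})$ is independent of the choice of defining chain $X_{\alpha}$. The strategy is to exploit the two-sided symmetry of $\alpha$: since $S_{1}$ is closed, both sides of $\alpha$ have positive genus, and defining chains on opposite sides are automatically disjoint, so their images under $\widetilde{\psi}$ will be disjoint chains whose regular neighbourhoods nearly fill $S_{1}$.

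More precisely, an essential separating curve $\alpha$ splits $S_{1}$ into two components of genera $k$ and $g-k$ with $1 \leq k \leq g-1$, and any defining chain of $\alpha$ lies entirely on one side, with length $2k$ or $2(g-k)$. Fix two defining chains $X$ and $Y$ of $\alpha$ on opposite sides, of lengths $2k$ and $2(g-k)$. Since $X$ and $Y$ are disjoint, Corollary \ref{halfmultipants} and Lemma \ref{chainstochains} imply that $\widetilde{\psi}(X)$ and $\widetilde{\psi}(Y)$ are disjoint chains of the same respective lengths. Their closed regular neighbourhoods $N_{X}$ and $N_{Y}$ can therefore be chosen disjoint, with genera $k$ and $g-k$ and a single boundary component each, so that
\[
 \chi(N_{X}) + \chi(N_{Y}) \;=\; (1-2k) + (1-2(g-k)) \;=\; 2 - 2g \;=\; \chi(S_{1}).
\]
Thus the complement $S_{1} \setminus (N_{X} \cup N_{Y})$ has Euler characteristic $0$ and exactly two boundary components. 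Because $1 \leq k \leq g-1$, neither $\partial N_{X}$ nor $\partial N_{Y}$ bounds a disk in $S_{1}$; a short case analysis of compact surfaces of Euler characteristic zero with at most two essential boundary components then forces the complement to be a single annulus, so $\partial N_{X}$ and $\partial N_{Y}$ are isotopic. Hence $X$ and $Y$ give the same candidate for $\widehat{\psi}(\alpha)$.

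To finish, given two arbitrary defining chains $X$ and $X'$ of $\alpha$, Lemma \ref{differentchains} dichotomises: either $X$ and $X'$ lie on opposite sides and are disjoint, in which case the previous step applies directly, or they lie on the same side and every curve in one meets some curve in the other. In the latter case I choose any defining chain $Y$ of $\alpha$ on the opposite side (which exists since both sides have positive genus) and apply the previous paragraph twice, to $(X,Y)$ and to $(X',Y)$, to conclude that $X$ and $X'$ yield the same boundary curve as well.

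The main subtlety is the annulus identification inside the Euler characteristic step: ruling out the possibility that $S_{1} \setminus (N_{X} \cup N_{Y})$ is disconnected, contains a disk, or contains a closed component. Essentiality of both boundary curves, guaranteed by $1 \leq k \leq g-1$, together with connectedness of $S_{1}$, eliminates these cases and makes the annulus the only possibility.
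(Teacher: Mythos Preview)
Your proof is correct and follows essentially the same two-step strategy as the paper: first handle defining chains on opposite sides of $\alpha$ by showing the images have disjoint regular neighbourhoods whose complement is forced to be an annulus, then reduce the same-side case to the opposite-side case via an auxiliary chain on the other component. The only cosmetic difference is that you package the ``complement is an annulus'' step as an Euler-characteristic count, whereas the paper phrases it as a sequential cutting-and-genus argument; these are equivalent, and your use of connectedness of $S_{1}$ to rule out disconnected or closed complementary pieces is exactly what is implicitly used there.
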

\begin{proof}
Let $\alpha$ be a separating curve and $X_{1}$ and $X_{2}$ be two defining chains of $\alpha$. We divide this proof in two parts, depending on whether $X_{1}$ and $X_{2}$ are in the same connected component of $S_{1} \backslash \{\alpha\}$ or not.\\
\textit{Part 1:} If $X_{1}$ and $X_{2}$ are in two different connected components, then due to Corollary \ref{halfmultipants} we have that every element in $\widehat{\psi}(X_{1}) = \widetilde{\psi}(X_{1})$ will be disjoint from every element in $\widehat{\psi}(X_{2}) = \widetilde{\psi}(X_{2})$; now, if $X_{1}$ (and thus also $\widehat{\psi}(X_{1})$) has length $2k$, then $X_{2}$ (and thus also $\widehat{\psi}(X_{2})$) has length $2(g-k)$. If we cut $S_{1}$ along the boundary curve of the regular neighbourhood of $\widehat{\psi}(X_{1})$, we obtain a surface $S_{1}^{\prime}$ that has two connected components, one of genus $k$ and another of genus $g-k$. If we cut $S_{1}^{\prime}$ along the boundary curve of a regular neighbourhood of $\widehat{\psi}(X_{2})$ (which means we are cutting $S_{1}^{\prime}$ in the connected component of genus $g-k$), we obtain a surface with three connected components: one of genus $k$ (since it is where the elements of $\widehat{\psi}(X_{1})$ are contained), one of genus $g-k$ (since it is where the elements of $\widehat{\psi}(X_{2})$ are contained), and an annulus. Therefore the two boundary curves of the regular neighbourhoods are isotopic, i.e. $\widehat{\psi}(\alpha)$ is well defined for these two chains.\\
\textit{Part 2:} If $X_{1}$ and $X_{2}$ are in the same connected component, then we can find a defining chain $X_{3}$ on the other connected component such that the pairs $(X_{1}, X_{3})$ and $(X_{2}, X_{3})$ satisfy the conditions of the previous part, so the boundary curves of the regular neighbourhoods of the chains $(\widehat{\psi}(X_{1}), \widehat{\psi}(X_{3}))$ and $(\widehat{\psi}(X_{2}),\widehat{\psi}(X_{3}))$ are isotopic. Therefore $\widehat{\psi}(\alpha)$ is well defined.
\end{proof}
\indent Now we prove that $\widehat{\psi}$ is an edge-preserving map, so that we can apply Theorem A from \cite{JHH2}.
\begin{Lema}\label{hatpsiedge}
 $\widehat{\psi}$ is an edge-preserving map.
\end{Lema}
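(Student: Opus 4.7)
The plan is to verify that $\widehat{\psi}$ sends disjoint pairs of curves in $\ccomp{S_{1}}$ to disjoint pairs, split into three cases depending on how many of the two input curves are separating. Throughout I will use Corollary~\ref{halfmultipants} (which says $\widetilde{\psi}$ preserves both disjointness and intersection one) together with the geometric fact that a chain fills its closed regular neighbourhood, so no essential curve in that neighbourhood can be disjoint from every element of the chain. Fix distinct disjoint curves $\alpha,\beta$ in $S_{1}$.

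If both $\alpha$ and $\beta$ are nonseparating, then $\widehat{\psi}(\alpha)=\widetilde{\psi}(\alpha)$ and $\widehat{\psi}(\beta)=\widetilde{\psi}(\beta)$, and the conclusion is immediate from Corollary~\ref{halfmultipants}. Suppose next that $\alpha$ is nonseparating and $\beta$ is separating. Since $\alpha$ is disjoint from $\beta$, it lies in one of the two components of $S_{1}\backslash\{\beta\}$, and both components have positive genus because $S_{1}$ is closed and $\beta$ is essential; so I can pick a defining chain $X_{\beta}$ of $\beta$ inside the component not containing $\alpha$. Every curve in $X_{\beta}$ is disjoint from $\alpha$, so by Corollary~\ref{halfmultipants} every curve in $\widetilde{\psi}(X_{\beta})$ is disjoint from $\widetilde{\psi}(\alpha)$. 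Because $\widetilde{\psi}(X_{\beta})$ fills its closed regular neighbourhood $N$, the curve $\widetilde{\psi}(\alpha)$ cannot sit inside $N$, and can therefore be isotoped off $N$ entirely; in particular it is disjoint from $\partial N=\widehat{\psi}(\beta)$. Distinctness in this case is automatic, since $\widetilde{\psi}(\alpha)$ is nonseparating while $\widehat{\psi}(\beta)$ is separating.

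Finally suppose both $\alpha$ and $\beta$ are separating. The curve $\beta$ lies in one component $Q$ of $S_{1}\backslash\{\alpha\}$. I select a defining chain $X_{\alpha}$ in the other component, and any defining chain $X_{\beta}$ of $\beta$, which necessarily sits inside $Q$. Then $X_{\alpha}\cup X_{\beta}$ is a pairwise disjoint multicurve of nonseparating curves; by iterating Corollary~\ref{halfmultipants} the multicurve $\widetilde{\psi}(X_{\alpha})\cup\widetilde{\psi}(X_{\beta})$ is likewise pairwise disjoint. Consequently I can take disjoint closed regular neighbourhoods for the two chains, whose boundary curves $\widehat{\psi}(\alpha)$ and $\widehat{\psi}(\beta)$ are then disjoint. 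Distinctness follows by comparing the genera of the subsurfaces that the two chains fill: if the boundaries coincided, the two disjoint filled regular neighbourhoods would have to realise opposite sides of a single separating curve in $S_{1}$, which can be ruled out by producing an auxiliary nonseparating curve that distinguishes the two sides via Corollary~\ref{halfmultipants}.

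The main obstacle is the second case, specifically the step where disjointness from each curve of the chain $\widetilde{\psi}(X_{\beta})$ is upgraded to disjointness from the boundary of the regular neighbourhood. The key geometric input is that a chain fills its regular neighbourhood, and this is precisely what allows the image curve to be isotoped out of the neighbourhood; the third case then runs on essentially the same engine applied to two chains simultaneously.
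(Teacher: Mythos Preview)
Your argument is correct and uses the same three-case split as the paper, but two of the cases are handled by different devices. In the mixed case the paper takes a shortcut: instead of placing $X_\beta$ on the side of $\beta$ opposite $\alpha$ and then invoking the filling property to push $\widetilde{\psi}(\alpha)$ off the neighbourhood, it simply chooses a defining chain $X_\beta$ with $\alpha \in X_\beta$, so that $\widetilde{\psi}(\alpha)$ is literally a member of the chain $\widetilde{\psi}(X_\beta)$ and hence disjoint from $\partial N = \widehat{\psi}(\beta)$ by construction. For distinctness when both curves are separating, the paper uses \emph{nested} defining chains $X_1 \subset X_1 \cup X_2$ (for $\alpha$ and $\beta$ respectively) and appeals to Lemma~\ref{differentchains}: the image chain $\widetilde{\psi}(X_1 \cup X_2)$ contains both an element disjoint from all of $\widetilde{\psi}(X_1)$ and an element intersecting one of them, which that lemma says is impossible for two defining chains of the same separating curve. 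Your auxiliary-curve idea works as well---the region of $S_1$ between $\alpha$ and $\beta$ has positive genus, so it carries a nonseparating curve $\delta$ disjoint from $X_\alpha \cup X_\beta$, and $\widetilde{\psi}(\delta)$ would have nowhere to live if the two image chains filled the two sides of a single separating curve---but as written it is only a sketch. One small slip: a defining chain of $\beta$ does not ``necessarily'' lie inside $Q$, since it could sit on either side of $\beta$; you must \emph{choose} it on the side of $\beta$ contained in $Q$.
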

\begin{proof}
 \indent What we must prove is that given $\alpha$ and $\beta$ two disjoint curves, then $\widehat{\psi}(\alpha)$ and $\widehat{\psi}(\beta)$ are disjoint. If both $\alpha$ and $\beta$ are nonseparating curves, then we get the result from Corollary \ref{halfmultipants}. If $\alpha$ is nonseparating and $\beta$ is separating, let $X$ be a defining chain of $\beta$ such that $\alpha \in X$. Then by definition $\widehat{\psi}(\alpha)$ is disjoint from $\widehat{\psi}(\beta)$.\\
 \indent If $\alpha$ and $\beta$ are both separating, then we can always find two disjoint defining chains $X_{\alpha}$ and $X_{\beta}$ of $\alpha$ and $\beta$ respectively. Then by the two previous cases, every element of $\widehat{\psi}(X_{\alpha})$ is disjoint from every element of $\widehat{\psi}(X_{\beta}) \cup \{\widehat{\psi}(\beta)\}$ and every element of $\widehat{\psi}(X_{\beta})$ is disjoint from every element of $\widehat{\psi}(X_{\alpha}) \cup \{\widehat{\psi}(\alpha)\}$. Since by definition $\widehat{\psi}(\alpha)$ is the boundary curve of a regular neighbourhood of $\widehat{\psi}(X_{\alpha})$, if $\widehat{\psi}(\beta)$ and $\widehat{\psi}(\alpha)$ were to intersect each other, $\widehat{\psi}(\beta)$ would have to intersect at least one element of $\widehat{\psi}(X_{\alpha})$; thus $i(\widehat{\psi}(\beta), \widehat{\psi}(\alpha)) = 0$.\\
 \indent To prove that $\widehat{\psi}(\alpha) \neq \widehat{\psi}(\beta)$, let $X_{1}$ and $X_{2}$ be chains such that $X_{1}$ is a defining chain of $\alpha$ and $X_{1} \cup X_{2}$ is a defining chain of $\beta$. Thus $\widehat{\psi}(X_{1})$ and $\widehat{\psi}(X_{1} \cup X_{2})$ are defining chains of $\widehat{\psi}(\alpha)$ and $\widehat{\psi}(\beta)$ respectively. This implies that there exists (by the first case) an element in $\widehat{\psi}(X_{1} \cup X_{2})$ that is disjoint from every element in $\widehat{\psi}(X_{1})$, and another element in $\widehat{\psi}(X_{1} \cup X_{2})$ that intersects at least one element in $\widehat{\psi}(X_{1})$ (this happens since $\widehat{\psi}|_{\gcomp{S_{1}}} = \widetilde{\psi}$ and we can apply Corollary \ref{halfmultipants}). Then by Lemma \ref{differentchains}, $\widehat{\psi}(\alpha) \neq \widehat{\psi}(\beta)$. Therefore they are disjoint.
\end{proof}
Now, for the sake of completeness, we first cite Theorem A from \cite{JHH2} and then finalize with the proof of Theorem \ref{TheoB}:
\begin{Teono}[A in \cite{JHH2}]
 Let $S_{1} = S_{g_{1},n_{1}}$ and $S_{2} = S_{g_{2},n_{2}}$ be two orientable surfaces of finite topological type such that $g_{1} \geq 3$, and $\kappa(S_{2}) \leq \kappa(S_{1})$; let also $\fun{\varphi}{\ccomp{S_{1}}}{\ccomp{S_{2}}}$ be an edge-preserving map. Then, $S_{1}$ is homeomorphic to $S_{2}$ and $\varphi$ is induced by a homeomorphism $S_{1} \rightarrow S_{2}$.
\end{Teono}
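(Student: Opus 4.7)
The plan is to prove the theorem in three stages: (i) show that $\varphi$ sends pants decompositions to pants decompositions and deduce $\kappa(S_{1})=\kappa(S_{2})$; (ii) show that $\varphi$ preserves topological types of curves, and thereby that $S_{1}\cong S_{2}$; (iii) inductively show that $\varphi$ is induced by a homeomorphism. For (i), a pants decomposition $P$ of $S_{1}$ is a set of $\kappa(S_{1})$ pairwise disjoint curves; since $\varphi$ is edge-preserving, $\varphi(P)$ consists of $\kappa(S_{1})$ pairwise disjoint and pairwise distinct curves in $S_{2}$. The maximum cardinality of a multicurve on $S_{2}$ is $\kappa(S_{2})$, so combined with the hypothesis $\kappa(S_{2})\leq\kappa(S_{1})$ we conclude $\kappa(S_{1})=\kappa(S_{2})$ and that $\varphi(P)$ is a pants decomposition of $S_{2}$.

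For (ii), topological types of curves are detected by purely combinatorial data of the curve graph. A curve $\alpha$ is nonseparating if and only if some (equivalently every) pants decomposition containing $\alpha$ admits an elementary move modifying only $\alpha$ via a curve intersecting it once (this reflects a genus-one subsurface in the complement), whereas if $\alpha$ is separating of type $(g',n'\,|\,g'',n'')$, then every pants decomposition containing $\alpha$ splits as the union of $\alpha$ with pants decompositions of the two complementary subsurfaces, whose cardinalities record the type. Since $\varphi$ transports pants decompositions and, by edge-preservation on each link, the full adjacency structure around each curve, these local invariants are preserved; hence $g_{1}=g_{2}$ and $n_{1}=n_{2}$, so $S_{1}$ is homeomorphic to $S_{2}$ and we may regard $\varphi$ as an edge-preserving self-map of $\ccomp{S}$ for a fixed $S$.

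The final stage is to promote $\varphi$ to a homeomorphism, by induction on $\kappa(S)$. For any nonseparating curve $\alpha$, the restriction of $\varphi$ to the link of $\alpha$ yields an edge-preserving map $\ccomp{S\setminus\alpha}\to\ccomp{S\setminus\varphi(\alpha)}$ between curve graphs of strictly smaller complexity (and same topological type on both sides by (ii)); the inductive hypothesis provides a homeomorphism $h_{\alpha}$ of $S\setminus\alpha$ realizing this restriction. Connecting nonseparating curves by elementary moves and applying a standard Alexander-method compatibility argument on overlaps then glues the $h_{\alpha}$ into a global homeomorphism $h$ of $S$ with $h_{\ast}=\varphi$ on nonseparating curves; separating curves are dealt with exactly as in Section \ref{chap4sec3subsec2} of the present paper, reconstructing each as the boundary of a regular neighbourhood of a defining chain and showing $\varphi$ commutes with this construction. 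The main obstacle is this final gluing step: edge-preserving maps lack surjectivity, so the classical Ivanov--Korkmaz--Luo machinery does not apply directly, and one must bootstrap rigidity from the local inductive data while handling the sporadic base cases (low-complexity surfaces such as $S_{1,2}$ and $S_{0,5}$ where the curve graph is exceptional and "disjointness" may need to be reinterpreted as minimal intersection), which is the most delicate ingredient of the argument.
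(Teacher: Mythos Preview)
The paper does not contain a proof of this statement: it is Theorem~A of the companion article \cite{JHH2}, quoted here verbatim ``for the sake of completeness'' and used as a black box in the proof of Theorem~\ref{TheoB}. There is therefore no proof in the present paper to compare your proposal against.

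As for the proposal on its own terms, part (i) is correct. Part (ii), however, mixes features of the pants graph into an argument about the curve graph: ``elementary moves'' and the clause ``via a curve intersecting it once'' are not data visible in $\ccomp{S}$, whose edges record only disjointness. Detecting that a curve is nonseparating (or recovering the topological type of its complement) from the curve graph alone is possible, but it requires a genuinely combinatorial characterisation---e.g.\ via the structure of the link, adjacency patterns among pants decompositions, or the small-circuit/pentagon arguments used in \cite{Shack} and \cite{JHH2}---none of which your sketch supplies. Part (iii) has a structural problem: cutting along a nonseparating curve sends $S_{g,n}$ to $S_{g-1,n+2}$, so after one step the hypothesis $g_{1}\geq 3$ fails and your inductive hypothesis no longer applies; the ``sporadic base cases'' you allude to are surfaces to which the theorem as stated says nothing, so they cannot serve as a base for this induction without an independent argument. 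The actual proof in \cite{JHH2} avoids this descent entirely: it proceeds via finite rigid sets and the machinery of rigid expansions developed in \cite{JHH1}, which is a rather different strategy from the link-restriction induction you outline.
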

\textbf{Proof of Theorem \ref{TheoB}:} We apply Theorem A from \cite{JHH2} to $\widehat{\psi}$, obtaining an element $h$ of $\EMod{S_{1}}$ that induces it. Since $h|_{\gcomp{S_{1}}} = \widehat{\psi}|_{\gcomp{S_{1}}} = \widetilde{\psi}$, we have that (by Corollary \ref{cordefpsitilde}) for every cut system $C = \{\alpha_{1}, \ldots, \alpha_{g}\}$ in $S_{1}$, $\widetilde{\phi}(C) = \{h(\alpha_{1}), \ldots, h(\alpha_{g})\}$. Therefore $h$ induces $\widetilde{\phi}$.
\section{Proof of Corollary \ref{CoroC}}\label{chap4sec4}
\indent To prove Corollary \ref{CoroC}, we first prove a consequence of Theorem \ref{TheoB}:
\begin{Cor}\label{CorS2isS1}
 Let $S = S_{g,0}$ be an orientable closed surface of finite topological type of genus $g \geq 3$, and $\fun{\phi}{\htcomp{S}}{\htcomp{S}}$ be an \epa map. Then $\phi$ is induced by a homeomorphism of $S$.
\end{Cor}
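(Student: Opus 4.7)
The plan is to observe that this corollary is essentially the special case of Theorem \ref{TheoB} where $S_{1} = S_{2} = S$, and to check that under the closedness hypothesis the map $\piHT$ reduces to the identity, so that $\widetilde{\phi}$ coincides with $\phi$ itself.

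First I would verify that the hypotheses of Theorem \ref{TheoB} are met by taking $S_{1} = S_{2} = S = S_{g,0}$: both surfaces are connected orientable of genus $g \geq 3$ with empty boundary, and $S_{1}$ is closed, as required. Second, since $S_{2} = S$ has no punctures, the filling-of-punctures procedure does nothing, so the induced map $\fun{\piC}{\mathcal{V}(Y(S_{2}))}{\mathcal{V}(\ccomp{S_{1}})}$ is the identity on curves, and consequently $\fun{\piHT}{\mathcal{V}(\htcomp{S_{2}})}{\mathcal{V}(\htcomp{S_{1}})}$ is the identity on cut systems. Hence $\widetilde{\phi} = \piHT \circ \phi = \phi$.

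Finally, applying Theorem \ref{TheoB} directly yields that $\widetilde{\phi} = \phi$ is induced by a homeomorphism of $S_{1} = S$, which is exactly the desired conclusion. There is no genuine obstacle here: the corollary amounts to specializing Theorem \ref{TheoB} to the case $n_{2} = 0$ and observing that in that case the ``filling punctures'' map is trivial. The only small point to keep in mind is the justification that $\piHT$ really is the identity when $S_{2}$ has no punctures, which follows immediately from the definitions given in Section \ref{chap4sec3subsec1}.
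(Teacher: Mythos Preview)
Your proposal is correct and follows exactly the same approach as the paper: specialize Theorem \ref{TheoB} to $S_{1} = S_{2} = S$, observe that $\piHT$ is the identity since there are no punctures to fill, so $\widetilde{\phi} = \phi$, and conclude.
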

\begin{proof}
 By supposing $S = S_{1} = S_{2}$ we have that $\piHT$ is the identity, and by applying Theorem \ref{TheoB} we obtain that $\widetilde{\phi} = \phi$ is induced by a homeomorphism.
\end{proof}
\begin{proof}[\textbf{Proof of Corollary \ref{CoroC}}]
 \indent Let $\fun{\phi}{\htcomp{S_{1}}}{\htcomp{S_{2}}}$ be an isomorphism. We first prove that it is alternating. By Lemma \ref{colourepa} we have that for all cut systems $C$ in $S_{1}$, $\phi$ preserves the colours in $\lk{C}$. Applying the same lemma to $\phi^{-1}$, we have that two cut systems in $\lk{C}$ are in the same colour if and only if their images are in the same colour in $\lk{\phi(C)}$. This implies that $\phi$ is an alternating map.\\
 \indent Given that $\phi$ and $\phi^{-1}$ are isomorphisms, then they are also edge-preserving maps, and by Theorem \ref{TheoA} applied to $\phi$ and $\phi^{-1}$, we have that $g_{1} = g_{2}$.\\
 \indent Now, let $S = S_{g,0}$ with $g \geq 3$. To prove that $\Aut{\htcomp{S}}$ is isomorphic to $\EMod{S}$, we note there is a natural homomorphism:
 \begin{center}
  \begin{tabular}{cccccc}
   $\Psi_{\htcomp{S}}:$ & $\EMod{S}$ & $\longrightarrow$ & $\mathrm{Aut}(\htcomp{S})$ & & \\
    & $[h]$ & $\mapsto$ & $\varphi:\htcomp{S}$ & $\rightarrow$ & $\htcomp{S}$\\
    & & & $\{\alpha_{1}, \ldots, \alpha_{g}\}$ & $\mapsto$ & $\{h(\alpha_{1}), \ldots, h(\alpha_{g})\}$
  \end{tabular}
 \end{center}
 \textit{Injectivity}: If $h_{1}$ and $h_{2}$ are two homeomorphisms of $S$ such that $\Psi_{\htcomp{S}}([h_{1}]) = \Psi_{\htcomp{S}}([h_{2}])$, then the action of $h_{1}$ and $h_{2}$ on the nonseparating curves on $S$ would be exactly the same. Recalling that Schmutz-Schaller proved in \cite{Schmutz} that $\mathrm{Aut}(\gcomp{S})$ is isomorphic to $\EMod{S}$, this implies that $h_{1}$ is isotopic to $h_{2}$.\\
 \textit{Surjectivity}: If $\phi \in \mathrm{Aut}(\htcomp{S})$, then (as was proved above) it is an edge-preserving alternating map. Thus, by Corollary \ref{CorS2isS1} we have that $\phi$ is induced by a homeomorphism.\\
 \indent Therefore, $\mathrm{Aut}(\htcomp{S})$ is isomorphic to $\EMod{S}$.
\end{proof}

\end{document}